\font\co=lcircle10
\def\petit#1{{\scriptstyle #1}}
\def\jr{\smash{\raise2pt\hbox{\co \rlap{\rlap{\char'005} \char'007}}
               \raise6pt\hbox{\rlap{\vrule height6.5pt}}
               \raise2pt\hbox{\rlap{\hskip4pt \vrule height0.4pt depth0pt
                width7.7pt}}}}
\def\je{\smash{\raise2pt\hbox{\co \rlap{\rlap{\char'005}
                \phantom{\char'007}}}\raise6pt\hbox{\rlap{\vrule height6pt}}}}
\def\+{\smash{\lower2pt\hbox{\rlap{\vrule height14pt}}
                \raise2pt\hbox{\rlap{\hskip-3pt \vrule height.4pt depth0pt
                width14.7pt}}}}
\def\perm#1#2{\hbox{\rlap{$\petit {#1}_{\scriptscriptstyle #2}$}}%
                \phantom{\petit 1}}
\def\textcross{\ \smash{\lower4pt\hbox{\rlap{\hskip4.15pt\vrule height14pt}}
                \raise2.8pt\hbox{\rlap{\hskip-3pt \vrule height.4pt depth0pt
                width14.7pt}}}\hskip12.7pt}
\def\textelbow{\ \hskip.1pt\smash{\raise2.8pt%
                \hbox{\co \hskip 4.15pt\rlap{\rlap{\char'005} \char'007}
                \lower6.8pt\rlap{\vrule height3.5pt}
                \raise3.6pt\rlap{\vrule height3.5pt}}
                \raise2.8pt\hbox{%
                  \rlap{\hskip-7.15pt \vrule height.4pt depth0pt width3.5pt}%
                  \rlap{\hskip4.05pt \vrule height.4pt depth0pt width3.5pt}}}
                \hskip8.7pt}
\newtheorem{Theorem}{Theorem}[section]
\newtheorem*{Theorem*}{Theorem}
\newtheorem*{Lemma*}{Lemma}
\newtheorem{Lemma}[Theorem]{Lemma}
\newtheorem{Proposition}[Theorem]{Proposition}
\theoremstyle{definition}
\theoremstyle{remark}
\newcommand{\from}{\leftarrow}
\newcommand{\onto}{\twoheadrightarrow}
\newcommand{\into}{\hookrightarrow}
\newcommand{\CC}{\mathbb{C}}
\newcommand{\ZZ}{\mathbb{Z}}
\newcommand{\PP}{\mathbb{P}}
\newcommand\defn[1]{{\bf #1}}
\newcommand\iso{\cong}
\newcommand\barX{{\overline X}}
\newcommand\calD{{\mathcal D}}
\newcommand\naturals{{\mathbb N}}
\newcommand\union{\cup}
\newcommand\Union{\bigcup}
\DeclareMathOperator{\rank}{rank}
\newcommand\junk[1]{}
\begin{document}

\title[Schubert polynomials, pipe dreams, equivariant classes,
and a co-transition formula]
{Schubert polynomials, pipe dreams, equivariant classes,\\ 
  and a co-transition formula}

\author{Allen Knutson}
\address{Department of Mathematics, Cornell University, Ithaca, NY 14853 USA}
\email{allenk@math.cornell.edu}
\thanks{AK was partially supported by NSF grant DMS-0902296.}
\date{July 29, 2020}

\begin{abstract}
  We give a new proof that three families of polynomials coincide:
  the double Schubert polynomials of Lascoux and Sch\"utzenberger
  defined by divided difference operators, the pipe dream polynomials
  of Bergeron and Billey, and the equivariant cohomology classes of
  matrix Schubert varieties. All three families are shown to satisfy a
  ``co-transition formula'' which we explain to be some extent
  projectively dual to Lascoux' transition formula.
  We comment on the $K$-theoretic extensions.
\end{abstract}

\dedicatory{For Bill Fulton's 80th birthday}
\maketitle

\setcounter{tocdepth}{1}
{\footnotesize 
\tableofcontents}


\newcommand\Ssub[1]{{S_{#1}}}
\newcommand\Sn{\Ssub n}
\newcommand\Sinf{\Ssub \naturals}
\newcommand\NN\naturals

\section{Overview}\label{sec:overview}

Let $S_\infty := \union_{n=1}^\infty S_n$ be the permutations $\pi$ of 
$\NN_+$ that are eventually the identity, i.e. $\pi(i) = i$ for $i\gg 0$.
We define three families of polynomials in $\ZZ[x_1,x_2,\ldots,y_1,y_2,\ldots]$,
named $A$(lgebra), $C$(ombinatorics), and $G$(eometry), 
and each indexed by $S_\infty$:

\begin{enumerate}
\item {\em Double Schubert polynomials $A_\pi$.} These were defined by
  Lascoux and Sch\"utzenberger \cite{LascouxSchutzenberger}, 
  using a recurrence relation based on
  divided difference operators. We recapitulate the definition in 
  \S \ref{sec:schubert}, with a mildly novel approach.
\item {\em Pipe dream polynomials $C_\pi$.} These were introduced (in the
  $(x_i)$ variables only, and not called this) by N. Bergeron and 
  Billey \cite{BB}; we recall them in \S \ref{sec:pipedream}.
\item {\em Matrix Schubert classes $G_\pi$.} These were introduced 
  by Fulton \cite{Fulton92, universal} (and again, not called this)
  to give universal formul\ae\
  for the classes of degeneracy loci of generic maps between flagged
  vector bundles.
  This concept was reinterpreted cohomologically in
  \cite{KM,Kazarian}, as giving the equivariant cohomology classes
  associated to matrix Schubert varieties; we recall this 
  interpretation in \S \ref{sec:matrix}.  
\end{enumerate}

In this paper we give an expeditious proof of the following known
results \cite{BB,KM}:

\begin{Theorem}\label{thm:main}
  For all $\pi\in S_\infty$, $A_\pi = C_\pi = G_\pi$.
\end{Theorem}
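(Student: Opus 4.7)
The plan is to establish a common recursion --- the co-transition formula promised in the abstract --- satisfied by all three families, together with a common base case. Induction on a suitable statistic (for instance length, or the number of boxes in the Rothe diagram lying outside a specified initial segment) then forces $A_\pi = C_\pi = G_\pi$ for every $\pi \in S_\infty$.

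For the base, I would anchor at the \emph{dominant} permutations, where all three polynomials visibly equal $\prod_{(i,j) \in D(\pi)} (x_i - y_j)$ indexed by the cells of the Rothe diagram $D(\pi)$: the divided-difference definition of $A_\pi$ terminates at this monomial; the unique reduced pipe dream for a dominant $\pi$ fills its Rothe diagram, yielding the same product for $C_\pi$; and the corresponding matrix Schubert variety is a coordinate linear subspace cut out by the entries $x_{ij} - y_j$ for $(i,j) \in D(\pi)$, whose equivariant class is again this product. (Equally one could anchor at the identity, where all three evaluate to $1$.)

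Next I would state the co-transition formula as a signed linear relation expressing the polynomial at $\pi$ in terms of polynomials at permutations $\pi'$ obtained from $\pi$ by a combinatorial operation dual to that of Lascoux' transition formula --- roughly, ``adding'' a distinguished outer crossing or ascent rather than ``removing'' an inner one --- and then verify it in parallel across the three settings. For $A_\pi$ the verification is purely algebraic, using the divided difference recursion and a Leibniz-type identity for $\partial_i$ applied to products involving factors $(x_k - y_l)$. For $C_\pi$ the verification is combinatorial: pipe dreams of $\pi$ are partitioned by their configuration at a distinguished tile, and each block is placed in a weight-respecting bijection with pipe dreams of one of the $\pi'$. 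For $G_\pi$ the verification is geometric: one exhibits a one-parameter flat family (or hyperplane intersection) whose special fibre decomposes with multiplicity one as a union of matrix Schubert varieties indexed by the $\pi'$, so that additivity of equivariant classes yields the formula.

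The main obstacle, I expect, will be the geometric step: identifying the correct degeneration of the matrix Schubert variety $\overline{X_\pi}$, proving flatness and generic reducedness of the special fibre, and matching its components bijectively with the permutations appearing in the co-transition. This is the kind of geometric vertex decomposition argument that is powerful but requires care when carried over to equivariant cohomology. By contrast, the algebraic verification for $A_\pi$ should be essentially formal once the recursion is correctly pinned down, and the combinatorial verification for $C_\pi$ reduces to a case analysis on the configurations at a single distinguished tile.
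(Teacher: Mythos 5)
Your plan is essentially the paper's: a shared base case plus a shared co-transition recurrence pins down all three families. Two corrections, though. First, the anchor must be $w_0^n$ (the longest element of $S_n$), \emph{not} the identity. The paper's co-transition formula reads
\[(x_i - y_{\pi(i)})\, P_\pi \;=\; \sum\left\{P_\sigma \;:\; \sigma \in S_n,\ \sigma \gtrdot \pi,\ \sigma(i)\neq\pi(i)\right\},\]
with $i$ minimal so that $i + \pi(i) < n$; this expresses $P_\pi$ in terms of \emph{longer} permutations, so the induction runs downward in Bruhat order from $w_0^n$, and starting from the identity gives you nothing to recurse on. (Your ``dominant permutations'' anchor is consistent, since $w_0^n$ is dominant, but it is the top-element base case that actually drives the induction.) Second, the co-transition coefficients are all $+1$, not signed: the choice of $i$ ensures that every surviving $\sigma$ in the equivariant Monk rule has $\sigma(i) > \pi(i)$, so the negative terms vanish. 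With those adjustments, your three verification sketches --- a Leibniz-type identity for divided differences for $A$ (the paper proves the full equivariant Monk rule and then specializes), a single-tile insert/remove bijection for $C$, and a hyperplane section whose components appear with multiplicity one for $G$ --- are exactly what the paper does, and your expectation that the geometric multiplicity-one step is the most delicate is borne out (Lemmas \ref{lem:Tcover} and \ref{lem:stable}).
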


This will follow from a base case $w_0^n$ they share, where
$w_0^n(i) :=
\begin{cases}
  i &\text{if }i>n \\
  n+1-i  &\text{if }i\leq n ,
\end{cases}$

\begin{Lemma*}[The base case]\label{lem:base}
  For each of $P \in \{A,C,G\}$, 
  we have $P_{w_0^n} = \displaystyle\prod_{i,j \in [n],\ i+j\leq n} (x_i - y_j)$.
\end{Lemma*}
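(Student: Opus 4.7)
The plan is to verify the formula separately for $A$, $C$, and $G$, since their three definitions are quite different in character, and then to observe that all three collapse to the same explicit product.

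For $A$, the formula is essentially the base case of the Lascoux-Sch\"utzenberger recurrence: the longest element $w_0^n$ sits at the top of the divided-difference descent (every $\partial_i$ for $i<n$ would strictly decrease its length), so any formulation of the definition either declares $A_{w_0^n}$ to be this product outright or computes it in a short step. I would simply read the result off from the construction in Section \ref{sec:schubert}.

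For $C$, the plan is to show that $w_0^n$ admits a unique reduced pipe dream, namely the ``staircase'' pipe dream whose crosses occupy exactly the cells $(i,j)$ with $i+j \leq n$. This is the dominant case: the Lehmer code of $w_0^n$ is the staircase partition $(n-1,n-2,\ldots,1,0,\ldots)$, so the Rothe diagram is itself a Young diagram, and for dominant permutations the unique reduced pipe dream fills the Rothe diagram. Concretely, $\ell(w_0^n) = \binom{n}{2}$ matches the number of cells in the triangle, and a short pipe-tracing argument shows that any other placement of $\binom{n}{2}$ crosses either fails to produce $w_0^n$ or introduces a double crossing. The sum defining $C_{w_0^n}$ then has a single term, which evaluates to the stated product.

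For $G$, I would identify $\barX_{w_0^n}$ explicitly using Fulton's rank conditions $r_\pi(i,j) = \#\{k \leq i : \pi(k) \leq j\}$. For $\pi = w_0^n$ one computes $r_\pi(i,j) = \max(0,\ i+j-n)$, so the rank-zero conditions $i+j \leq n$ force the upper-left triangle of the matrix to vanish, and the remaining conditions (which allow full rank) are then automatic. Thus $\barX_{w_0^n}$ is a coordinate subspace of codimension $\binom{n}{2}$, and its $T \times T$-equivariant cohomology class is the product of the weights of the vanishing coordinates, yielding $\prod_{i+j \leq n}(x_i - y_j)$.

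The main obstacle I foresee is not conceptual but clerical: aligning row/column and sign conventions across the three definitions, so that the pipe-dream weight at cell $(i,j)$, the divided-difference starting polynomial, and the $T \times T$-weight of the matrix entry $M_{ij}$ all agree as $x_i - y_j$ (rather than differing by a transpose or by signs). Once these conventions are fixed consistently at the outset, each of the three verifications is a short computation, and the identification of $\barX_{w_0^n}$ as a coordinate subspace is the only place where genuine content enters.
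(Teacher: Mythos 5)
Your proposal is correct and follows essentially the same route as the paper: for $A$, the formula is definitional; for $C$, the unique pipe dream for $w_0^n$ fills the staircase triangle; and for $G$, $\barX_{w_0^n}$ is the coordinate subspace cut out by the triangle, so its class is the product of the corresponding weights. The one small difference is in the $C$ case, where you invoke the standard fact about dominant permutations having a unique pipe dream filling the Rothe diagram, whereas the paper argues more self-containedly via Lemma~\ref{lem:wiggler} (no crosses can lie outside the triangle $\{a+b\leq n\}$, and since the triangle has exactly $\binom{n}{2}=\ell(w_0^n)$ cells, all of them must be crosses) --- both routes are valid, but the paper's is perhaps preferable here since it reuses a lemma already established for the co-transition argument.
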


\noindent along with a recurrence they each enjoy:

\begin{Lemma*}[The \defn{co-transition formula}]\label{lem:cotransition}
  For each of $P \in \{A,C,G\}$, and $\pi \in S_n \setminus \{w_0^n\}$, 
  there exist $i$ such that $i + \pi(i) < n$. Pick $i$ minimum such. Then
  $$ (x_i - y_{\pi(i)}) \, P_\pi   = \sum \left\{ 
    P_\sigma\ :\ \sigma \in S_n, \ \sigma \gtrdot \pi, 
    \ \sigma(i) \neq \pi(i) \right\} $$
  where $\gtrdot$ indicates a cover in the Bruhat order.
\end{Lemma*}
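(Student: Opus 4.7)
The plan is to verify the co-transition formula separately for each $P \in \{A, C, G\}$, exploiting the structure proper to each family. A preliminary step common to all three is to confirm the existence of $i$ with $i + \pi(i) < n$, which follows from $\sum_i \pi(i) = n(n+1)/2$ together with $\pi \neq w_0^n$ (possibly after embedding $\pi$ into a sufficiently large $S_n$).

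The most direct case is $G$. Here $G_\pi$ represents the $T \times T$-equivariant class of the matrix Schubert variety $\overline{X}_\pi \subseteq M_n(\CC)$, and $x_i - y_{\pi(i)}$ represents the equivariant class of the coordinate hyperplane $H := \{M : M_{i,\pi(i)} = 0\}$. One first checks that $\overline{X}_\pi \not\subseteq H$, i.e., the matrix entry $M_{i,\pi(i)}$ does not vanish identically on $\overline{X}_\pi$; this uses the minimality of $i$ combined with $i + \pi(i) < n$, for otherwise the rank conditions defining $\overline{X}_\pi$ would force $M_{i,\pi(i)} = 0$. The scheme-theoretic intersection $\overline{X}_\pi \cap H$ is then a Cartier divisor whose equivariant class equals $(x_i - y_{\pi(i)}) G_\pi$. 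The central task, and main obstacle, is to identify its irreducible components as precisely the matrix Schubert varieties $\overline{X}_\sigma$ for the Bruhat covers $\sigma \gtrdot \pi$ with $\sigma(i) \neq \pi(i)$, each with multiplicity one. The containment $\overline{X}_\sigma \subseteq \overline{X}_\pi \cap H$ is direct (Bruhat order gives inclusion of Schubert varieties, and $\sigma(i) \neq \pi(i)$ forces the vanishing via the rank condition), while exhaustiveness and reducedness rest on the prime and Cohen--Macaulay structure of matrix Schubert varieties established in Knutson--Miller.

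For the algebraic family $A_\pi$, the strategy is to apply the Chevalley-type expansion of the product $(x_i - y_{\pi(i)}) A_\pi$ as a signed sum indexed by transpositions $\pi \cdot t_{ab}$. Under the minimality of $i$, the negative contributions cancel and the remaining positive terms reorganize into precisely the sum over Bruhat covers $\sigma \gtrdot \pi$ with $\sigma(i) \neq \pi(i)$. Alternatively, one may induct downward on $\ell(w_0^n) - \ell(\pi)$ from the base case by applying a divided difference $\partial_k$ to both sides of a known instance of the formula and matching terms via the Leibniz rule $\partial_k(fg) = (\partial_k f)\, g + (s_k f)(\partial_k g)$.

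For the combinatorial family $C_\pi$, the approach is a weight-preserving bijection on pipe dreams: the factor $x_i - y_{\pi(i)}$ on the left is interpreted as adjoining a new crossing at position $(i,\pi(i))$ to each pipe dream of $\pi$, and after a canonical local straightening the augmented diagram becomes a reduced pipe dream for some $\sigma \gtrdot \pi$ with $\sigma(i) \neq \pi(i)$; conversely, every reduced pipe dream for such a $\sigma$ arises uniquely this way by deleting a canonically located crossing. In both the $A$ and $C$ cases, the central difficulty is the same as in the geometric argument --- isolating exactly the Bruhat covers $\sigma$ with $\sigma(i) \neq \pi(i)$ --- and the minimality of $i$ is the key hypothesis used to rule out overcounting and spurious contributions.
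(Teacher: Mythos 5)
Your overall architecture matches the paper: a separate argument for each of $A$, $C$, $G$, with Monk's rule for $A$, a pipe-dream bijection for $C$, and a hyperplane section for $G$. But each of your three sketches drifts from the paper's actual argument, and two of them contain genuine gaps.

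\textbf{For $C$:} your ``canonical local straightening'' is not what happens, and is not needed. The paper's bijection is exactly a single tile toggle at $(i,\pi(i))$, with \emph{no} straightening moves. The content is Lemma~\ref{lem:crosses}: the minimality of $i$ forces the values $\pi(1)>\cdots>\pi(i)$ to sit in rows $1,\ldots,i$ as the leftmost $\textelbow$ tiles, so every pipe dream for $\pi$ has an $\textelbow$ at $(i,\pi(i))$ and every pipe dream for a relevant cover $\sigma$ has a $\textcross$ there. The technical work is Lemma~\ref{lem:holey}, which shows the toggle preserves the no-double-crossing condition in both directions. Invoking a ``straightening'' suggests an insertion-type algorithm (as in Little bumps or ladder moves), which the paper explicitly contrasts with and avoids; the toggle bijection is much simpler and is the point.

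\textbf{For $G$:} appealing to Cohen--Macaulayness of $\barX_\pi$ from Knutson--Miller cuts against the paper's stated aim of re-deriving $A=C=G$ by elementary means, and even granting CM it does not finish the job: CM of $\barX_\pi$ makes the hyperplane section unmixed with no embedded primes, but multiplicity one (generic reducedness, the $R_0$ half of Serre's criterion) requires a separate argument. The paper supplies this by the tangent-space computation of Lemma~\ref{lem:Tcover}, showing $T_\sigma(\barX_\pi \cap H) = T_\sigma\barX_\sigma$ at each cover $\sigma$. The paper also needs the $(B_-\times B_+)$-invariance trick---replacing $\{m_{i,\pi(i)}=0\}$ by the equivalent $\{m_{ab}=0 \ \forall (a,b) \text{ weakly NW}\}$---to know that components are matrix Schubert varieties at all, and a stability argument (Lemma~\ref{lem:stable}) to rule out components indexed by partial permutations. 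Your sketch omits both steps.

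\textbf{For $A$:} your first route (Chevalley/Monk expansion with the minimality of $i$ killing the negative terms) is essentially the paper's route via the equivariant Monk rule, Proposition~\ref{prop:eqvtMonk}, and is fine in outline. Your alternative ``induct down, apply $\partial_k$, match via Leibniz'' route is in fact how the paper proves Monk's rule itself, so these are not really two different proofs.

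\textbf{On existence of $i$:} the sum argument does not work. If $i+\pi(i)\ge n$ for all $i\le n$, then $\sum_i (i+\pi(i)) \ge n^2$ gives $n(n+1)\ge n^2$, which is always true and yields no contradiction. The correct statement needs the finer observation (used throughout the paper) that the NW triangle $\{(a,b): a+b\le n\}$ strictly contains $\pi$'s dominant region unless $\pi=w_0^n$. You also gloss over the second use of minimality: it guarantees the covers $\sigma$ all stay inside $S_n$ (no transpositions with positions $>n$), which the paper establishes by an explicit blocking argument in the proof for $P=A$ and by Lemma~\ref{lem:stable} for $P=G$.
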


The derivations of the co-transition formula in the three families are
to some extent parallel.
For $P=A$ we define the ``support'' of a polynomial and remove one point
from the support of $A_\pi$. In $P=C$ we (implicitly) study a subword
complex \cite{KM} whose facets correspond to pipe dreams for $\pi$, 
and delete a cone vertex from the complex. In $P=G$ we study a
hyperplane section of the matrix Schubert variety $\barX_\pi$,
which removes one $T$-fixed point from $\barX_\pi/T$.

In the remainder we recall the polynomials and prove the lemmata for
each of them. The word ``transition'' will appear in \S\ref{sec:schubert}, 
but the ``co-'' will only be explained in \S\ref{sec:trans}.
To further illuminate the connection between the pipe dream formula
and the co-transition formula, we include in \S\ref{sec:inductive}
an inductive formula that generalizes both and can be derived from either.

\subsection*{Acknowledgments.} It is a great pleasure
to get to thank Bill for so much mathematics, encouragement, and guidance
(especially in the practice and the importance of writing well; 
while my success has been limited I have at least always striven to emulate
his example). I thank Ezra Miller for his many key insights in \cite{KM}, 
even as I now obviate some of them here, and Bernd Sturmfels for his
early input to that project. I thank the referee for catching some
embarassing errors. Finally, this is my chance once more to
thank Nantel Bergeron, Sara Billey, Sergei Fomin, and Anatol Kirillov
for graciously accepting the terminology ``pipe dream''.
(See \cite{Bergeron}!)

\section{The double Schubert polynomials $(A_\pi)$}
\label{sec:schubert}

Define the \defn{nil Hecke algebra} $\ZZ[\partial]$ as having a $\ZZ$-basis
$\{ \partial_\pi\ :\ \pi \in S_\infty \}$ and the following very
simple product structure:
$$ \partial_\pi \partial_\rho := 
\begin{cases}
  \partial_{\pi \circ \rho} &\text{if }\ell(\pi \rho) = \ell(\pi) + \ell(\rho)\\
  0 &\text{otherwise, i.e. }\ell(\pi \rho) < \ell(\pi) + \ell(\rho).
\end{cases}
$$
Here
$\ell(\pi) := \#\left\{ (i,j)\in (\NN_+)^2\ :\ i<j,\ \pi(i)>\pi(j) \right\}$
denotes the number of inversions of $\pi$. So this algebra $\ZZ[\partial]$
is graded by $\deg \partial_\pi = \ell(\pi)$, 
and plainly is generated by the degree $1$ elements
$\{\partial_i := \partial_{r_i}\}$, modulo the \defn{nil Hecke relations}
$$ \partial_i^2 = 0\qquad\qquad
[\partial_i,\partial_j] = 0, \quad |i-j|>2 \qquad\qquad
\partial_i \partial_{i+1} \partial_i = \partial_{i+1} \partial_i \partial_{i+1}
$$

\newcommand\ul[1]{{\underline #1}}
This algebra has a module 
$\ZZ[\ul x, \ul y] := \ZZ[x_1,x_2,\ldots,y_1,y_2,\ldots]$ where the 
action is by \defn{divided difference operators} in the $x$ variables:
$$ \partial_i p := \frac{p - r_i(p)}{x_i - x_{i+1}} $$
Here $r_i \circlearrowright \ZZ[\ul x,\ul y]$
is the ring automorphism exchanging $x_i\leftrightarrow x_{i+1}$
and leaving all other variables alone. Since the numerator of $\partial_i p$
negates under switching $x_i$ and $x_{i+1}$, the long division algorithm for
polynomials shows that numerator to be a multiple of $x_i-x_{i+1}$,
so $\partial_i p$ is again a polynomial. To confirm that the above 
defines an action, one has to check the nil Hecke relations,
which is straightforward.

The action is linear in the $(y_i)$ variables, and the module comes with 
a $\ZZ[\ul y]$-linear augmentation $|_e:\, \ZZ[\ul x,\ul y] \to \ZZ[\ul y]$ 
setting each $x_i \mapsto y_i$. With this, we can define a pairing
\begin{eqnarray*}
  \ZZ[\partial] \times \ZZ[\ul x,\ul y] &\to& \ZZ[\ul y] \\
  (\partial_w, p) &\mapsto& (\partial_w(p))\,|_e
\end{eqnarray*}
Since the $\partial_w$ act $\ZZ[\ul y]$-linearly, it is safe
to extend the scalars in the nil Hecke algebra from $\ZZ$ to $\ZZ[\ul y]$,
and regard $\ZZ[\ul y]$ as our base ring for the two spaces being paired,
as well as the target of their pairing.

\begin{Proposition}
  This $\ZZ[\ul y]$-valued pairing of $\ZZ[\ul y][\partial]$ and
  $\ZZ[\ul x,\ul y]$ is perfect, so the basis
  $\{ \partial_\pi\ :\ \pi \in S_\infty \}$ has a dual $\ZZ[\ul y]$-basis
  $\{ A_\pi\ :\ \pi \in S_\infty \}$ of $\ZZ[\ul x,\ul y]$,
  called the \defn{double Schubert polynomials}.
  These are homogeneous with $\deg(A_\pi) = \ell(\pi)$.

  In this basis, the $\ZZ[\partial]$-module structure becomes
  $$ \partial_\pi A_\rho = 
  \begin{cases}
    A_{\rho \circ \pi^{-1}} & \text{if }
    \ell(\rho \circ \pi^{-1}) =     \ell(\rho) - \ell(\pi) \\
    0 &\text{otherwise, i.e. if }
    \ell(\rho \circ \pi^{-1}) >     \ell(\rho) - \ell(\pi) 
  \end{cases}
  $$
\end{Proposition}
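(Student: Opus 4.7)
The plan separates into establishing that the pairing is perfect (whence the dual basis $\{A_\pi\}$ exists) and deriving the action formula and homogeneity, which drop out as formal consequences.

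For the formal consequences, give $\ZZ[\ul x, \ul y]$ the grading $\deg x_i = \deg y_j = 1$. Then each $\partial_i$ lowers degree by $1$ and $|_e$ preserves degree, so $\langle \partial_w, p \rangle$ is homogeneous in $\ZZ[\ul y]$ of degree $\deg(p) - \ell(w)$, and is zero when this is negative. Hence any $A_\pi$ dual to $\partial_\pi$ must be homogeneous of degree exactly $\ell(\pi)$, since $\langle \partial_\pi, A_\pi \rangle = 1$ forces the diagonal to sit in $y$-degree zero. The action formula follows by expanding in the basis: $\partial_w A_\rho = \sum_\sigma \langle \partial_\sigma, \partial_w A_\rho \rangle A_\sigma = \sum_\sigma \langle \partial_\sigma \partial_w, A_\rho \rangle A_\sigma$, and the nil Hecke product rule selects only the single $\sigma = \rho w^{-1}$ with $\ell(\sigma) + \ell(w) = \ell(\rho)$, yielding $A_{\rho w^{-1}}$ or $0$ as claimed.

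For perfection, I would work inside each $S_n$ (passing to $S_\infty$ by stability) and exhibit a triangular structure for the pairing matrix. To each $\pi \in S_n$ I associate its Lehmer-code monomial $x^{c(\pi)} := x_1^{c_1(\pi)} \cdots x_n^{c_n(\pi)}$, of total degree $\ell(\pi)$. The key claim is that, under a linear order on $S_n$ refining length (with a lex-type refinement on Lehmer codes inside each length class), the matrix $\bigl[\langle \partial_\pi, x^{c(\rho)} \rangle\bigr]_{\pi,\rho \in S_n}$ is unitriangular: the grading kills entries with $\ell(\pi) > \ell(\rho)$; the diagonal entries all equal $\partial_\pi(x^{c(\pi)})|_e = 1$ (a Macdonald-style identity, with the familiar $\partial_{w_0^n}(x_1^{n-1} \cdots x_{n-1}) = 1$ as base case and the general version proved by induction via a reduced word and the explicit formula $\partial_i(x_i^a x_{i+1}^b) = (x_i^{a-1} + \cdots + x_{i+1}^{a-1})\, x_{i+1}^b$ for $a > b$); and a suitably chosen order forces the remaining equal-length off-diagonal entries into the strict triangular region. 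Inverting this matrix over $\ZZ[\ul y]$ then produces $A_\pi$ as a $\ZZ[\ul y]$-combination of $x^{c(\rho)}$ for $\rho$ not preceding $\pi$, plus lower-$x$-degree corrections with compensating homogeneous $y$-coefficients. Stability as $n$ grows is automatic, since $\partial_n$ annihilates polynomials in $x_1, \ldots, x_n$.

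The technically delicate step is this triangularity lemma, especially the coordination of the diagonal identity with the equal-length off-diagonal entries (which do not vanish outright --- already $\partial_{s_1}(x_2) = -1$ --- so the order must be chosen with care) and the verification that the chosen order puts them all above the diagonal. This is where the combinatorics of divided differences on Lehmer monomials has to be genuinely unpacked; everything else in the proof is either grading bookkeeping or formal manipulation inside the nil Hecke algebra.
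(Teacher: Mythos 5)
The paper itself does not prove this Proposition: it explicitly defers to standard references and notes that ``the usual proof of the existence starts with the base case $A_{w_0^n}$ as an \emph{axiom}, defining the other double Schubert polynomials using the module action.'' Your route --- exhibiting a unitriangular Gram matrix of the pairing against the Lehmer-code monomials --- is therefore genuinely different from what the paper sketches, and is a respectable classical alternative. Your ``formal consequences'' paragraph is correct: the grading argument does force any dual basis element to be homogeneous of degree $\ell(\pi)$ (the degree-$\ell(\pi)$ graded piece of a putative $A_\pi$ is itself a dual basis element, and dual bases are unique), and the derivation of the action formula from $\langle\partial_\sigma,\partial_w\,p\rangle = \langle\partial_\sigma\partial_w,\,p\rangle$ plus the nil Hecke product rule is exactly right.

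The gap, which you flag, is real and not merely a matter of bookkeeping, and one of the specifics you offer toward filling it is wrong. The formula $\partial_i(x_i^a x_{i+1}^b) = (x_i^{a-1}+\cdots+x_{i+1}^{a-1})\,x_{i+1}^b$ only holds when $b=0$; for $a>b>0$ the correct identity is $\partial_i(x_i^a x_{i+1}^b) = x_i^b x_{i+1}^b\,(x_i^{a-b-1} + x_i^{a-b-2}x_{i+1} + \cdots + x_{i+1}^{a-b-1})$, e.g.\ $\partial_1(x_1^2 x_2) = x_1 x_2$, not $x_1 x_2 + x_2^2$. More seriously, the proposed induction for the diagonal identity $\partial_\pi(x^{c(\pi)})|_e = 1$ doesn't close: if $\pi = \sigma r_i$ with $\ell(\pi)=\ell(\sigma)+1$, then $\partial_i x^{c(\pi)}$ equals $x^{c(\sigma)}$ plus other monomials of the same degree, and killing those other terms under $\partial_\sigma|_e$ is precisely the equal-length triangularity you are still trying to prove --- so the induction step and the off-diagonal vanishing are entangled, not independent. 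The claim is true (it is essentially the statement that the single Schubert polynomial $\mathfrak{S}_\pi$ has leading monomial $x^{c(\pi)}$), but establishing it does require genuinely unpacking the combinatorics of divided differences on monomials, or importing the known triangularity of the $\{\mathfrak{S}_\pi\} \leftrightarrow \{x^\alpha\}$ change of basis. Since the paper sidesteps this entirely by axiomatizing the top class, your proof would need to supply a self-contained argument here (a suitable total order on Lehmer codes plus the one-step analysis of how $\partial_i$ interacts with it, taking care that the resulting infinite triangular system has finite-support solutions), and as written that key lemma is asserted rather than established.
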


There are enough fine references for Schubert polynomials
(e.g. \cite{YoungTableaux}) that we don't further recapitulate 
the basics here. Dual bases are always unique, and perfection of the
pairing is equivalent to existence of the dual basis. 
The usual proof of the existence starts with the base case $A_{w_0^n}$
as an {\em axiom,} defining the other double Schubert polynomials using
the module action stated in the proposition.

It remains to prove the co-transition formula (for $P=A$), which 
in the ``single'' situation (setting all $y_i \equiv 0$) is plainly
a Monk's rule calculation. Since the ``double'' Monk rule is not
a standard topic, and the references we found to it
(e.g. notes by D. Anderson from a course by Fulton)
use theory beyond the algebra definition above, we include a proof of
the co-transition formula appropriate to $P=A$.

One tool for studying double Schubert polynomials
is the $\ZZ[\ul y]$-algebra homomorphism 
$\ZZ[\ul x,\ul y] \to \ZZ[\ul y]$, $x_i \mapsto y_{\rho(i)} \forall i$,
called \defn{restriction to the point $\rho$}.
We'll write this as $f \mapsto f|_\rho$,
generalizing the case $\rho=e$ (the identity) we used above to define 
the pairing. Here is how it interacts with divided difference operators:
\begin{equation}
  \label{eqn:divdiff}
(\partial_i f)|_\rho
= \frac{f - r_i f}{x_i - x_{i+1}}\bigg|_\rho
= \frac{f|_\rho - (r_i f)|_\rho}{x_i|_\rho - x_{i+1}|_\rho}
= \frac{f|_\rho - f|_{\rho r_i}}{y_{\rho(i)} - y_{\rho(i+1)}}
\tag{$*$}  
\end{equation}

Define the \defn{support} $supp(f)$ 
of a polynomial $f \in \ZZ[\ul x,\ul y]$ 
by $supp(f) :=\{\sigma\in S_\infty\ :\ f|_\sigma \neq 0\}$.
It has a couple of obvious properties: $supp(pq) = supp(p)\cap supp(q)$,
and $supp(p+q) \subseteq supp(p) \union supp(q)$.

\begin{Proposition}\label{prop:pointrestriction}
  \begin{enumerate}
  \item $supp(\partial_i f) \subseteq supp(f) \ \union\ supp(f)\cdot r_i $
  \item If $supp(f) \subseteq \{\tau : \tau\geq \sigma\}$,
    then $supp(\partial_i f)
    \subseteq \{\tau : \tau\geq \min(\sigma,\sigma r_i) \}$.
  \item $supp(A_{w_0^n}) \cap S_n = \{w_0^n\}$
  \item $A_\pi|_\rho = 0$ unless $\rho\geq \pi$ in Bruhat order.
    (The converse holds, but we won't show it.)
  \item Let $\pi \in S_n$. 
    Then $A_\pi|_\pi \neq 0$. (A small converse to (4).)
  \item If $f|_\rho = 0$ for all $\rho \in S_\infty$, then $f=0$.
  \item There is an algorithm to expand a polynomial $p$ as a
    $\ZZ[\ul y]$-combination of double Schubert polynomials:
    look for a Bruhat-minimal element $\sigma$ of the support,
    subtract off $\frac{p|_\sigma}{A_\sigma|_\sigma} A_\sigma$ from $p$
    (recording the coefficient $\frac{p|_\sigma}{A_\sigma|_\sigma}$),
    and recurse until $p$ becomes $0$.
  \end{enumerate}
\end{Proposition}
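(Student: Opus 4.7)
The plan is to establish the seven parts in order, with Equation~(\ref{eqn:divdiff}) and the standard lifting property of Bruhat order doing most of the work; the induction throughout is powered by the rule $\partial_i A_{\pi r_i} = A_\pi$ from the preceding proposition, valid whenever $r_i$ is an ascent of $\pi$. Parts (1) and (2) follow immediately from the rewritten right-hand side of Equation~(\ref{eqn:divdiff}): if $(\partial_i f)|_\rho \neq 0$, then at least one of $f|_\rho$, $f|_{\rho r_i}$ is nonzero, giving (1). For (2), I invoke the lifting property in the form ``$\sigma \leq \tau$ implies $\min(\sigma, \sigma r_i) \leq \min(\tau, \tau r_i)$''; applied to $\tau = \rho$ in the first alternative and to $\tau = \rho r_i$ (so that $\tau r_i = \rho$) in the second, both cases yield $\rho \geq \min(\sigma, \sigma r_i)$.

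For (3), the factorization $A_{w_0^n} = \prod_{i+j\leq n}(x_i - y_j)$ gives $A_{w_0^n}|_\sigma = \prod_{i+j\leq n}(y_{\sigma(i)} - y_j)$, which for $\sigma \in S_n$ vanishes iff $i + \sigma(i) \leq n$ for some $i$; a quick pigeonhole (reconstructing $\sigma(1) = n,\ \sigma(2) = n-1, \ldots$) shows the only $\sigma \in S_n$ avoiding this is $w_0^n$. For (4), I would induct on $\ell(w_0^n) - \ell(\pi)$ inside a sufficiently large $S_n$. The base case $\pi = w_0^n$ reduces via the same factorization to the implication ``$\rho(i) \geq n+1-i$ for all $i \in [n]$ implies $\rho \geq w_0^n$'', which follows from the tableau criterion: the bound $i + \rho(i) \geq n+1$ forces at most $j-1$ of $\rho(1),\ldots,\rho(k)$ to lie below $n-k+j$. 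The inductive step picks an ascent $r_i$ of $\pi$, writes $A_\pi = \partial_i A_{\pi r_i}$, and applies (2): since $\min(\pi r_i, \pi) = \pi$, the hypothesis $supp(A_{\pi r_i}) \subseteq \{\tau \geq \pi r_i\}$ upgrades to $supp(A_\pi) \subseteq \{\tau \geq \pi\}$.

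Part (5) is the step I expect to be the most delicate. I induct downward from $\pi = w_0^n$, the base case being contained in (3). For $\pi \neq w_0^n$ in $S_n$, pick an ascent $r_i$ and combine $\partial_i A_{\pi r_i} = A_\pi$ with Equation~(\ref{eqn:divdiff}) to compute
\[
A_\pi|_\pi \ = \ \frac{A_{\pi r_i}|_\pi - A_{\pi r_i}|_{\pi r_i}}{y_{\pi(i)} - y_{\pi(i+1)}} \ = \ \frac{-A_{\pi r_i}|_{\pi r_i}}{y_{\pi(i)} - y_{\pi(i+1)}},
\]
the second equality using part (4) applied to $A_{\pi r_i}$ at the smaller permutation $\pi$ (since $\pi \not\geq \pi r_i$). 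The left side is a priori a polynomial, and the numerator on the right is nonzero by induction, so the quotient is a nonzero polynomial. The subtle point is precisely this automatic polynomiality of the left side, which forces the apparent denominator to divide the numerator.

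Finally, (6) and (7) come together from the expansion $p = \sum_\pi c_\pi A_\pi$ in the dual basis guaranteed by the preceding proposition. Let $\sigma$ be Bruhat-minimal in $\{\pi : c_\pi \neq 0\}$. By (4) the only surviving term in $p|_\sigma$ is $c_\sigma A_\sigma|_\sigma$, which is nonzero by (5); hence $p \neq 0$ forces $\sigma \in supp(p)$, which is (6). The same identity applied to a Bruhat-minimal $\sigma \in supp(p)$ gives $c_\sigma = p|_\sigma / A_\sigma|_\sigma$, automatically a polynomial because it is an expansion coefficient; subtracting $c_\sigma A_\sigma$ strictly raises the Bruhat-minimum of the support (using (4) again), and termination is forced by the degree bound $\ell(\pi) \leq \deg p$, yielding (7).
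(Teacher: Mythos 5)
Your proof is correct and follows essentially the same route as the paper's: equation~(\ref{eqn:divdiff}) for (1), descent of the support under $\partial_i$ for (2) and (4), and the short computation $A_\pi|_\pi \propto -A_{\pi r_i}|_{\pi r_i}$ for (5). The only cosmetic divergences are that you invoke the lifting property where the paper cites the subword characterization of Bruhat order (equivalent standard facts), and you spell out the $\pi = w_0^n$ base case of (4) via the tableau criterion, which the paper leaves implicit.
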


\begin{proof}
  \begin{enumerate}
  \item 
    Use
    $(\partial_i f)|_\rho \ =\
    f|_\rho/(y_{\rho(i)} - y_{\rho(i+1)})\ -\ f|_{\rho r_i}/(y_{\rho(i)} - y_{\rho(i+1)})$
    from equation (\ref{eqn:divdiff}).
  \item This follows from (1) and the subword characterization of Bruhat order.
  \item This follows trivially from 
    $A_{w_0^n} = \prod_{i,j \in [n],\ i+j\leq n} (x_i - y_j)$.
  \item Fix $n$ such that $\pi,\rho \in S_n$,
    so $A_\pi = \partial_{\pi^{-1} w_0^n} A_{w_0^n}$.
    Let $Q$ be a reduced word for $\pi^{-1} w_0^n$.
    Apply (2); by the reducedness of $Q$ the $min(\sigma,\sigma r_i)$ is
    always $\sigma r_i$. By induction on $\# Q$ we learn
    $supp(\partial_{\pi^{-1} w_0^n} A_{w_0^n}) \subseteq \{\tau\ :\ \tau\geq\pi\}$,
    which is the result we seek.
  \item We use downward induction in weak Bruhat order
    from the easy base case $w_0^n$. If $\pi r_i \gtrdot \pi$, then
    $A_\pi|_\pi = (\partial_i A_{\pi r_i})|_\pi
    \propto (A_{\pi r_i}|_\pi - A_{\pi r_i}|_{\pi r_i}) 
    = - A_{\pi r_i}|_{\pi r_i} \neq 0$ using equation (\ref{eqn:divdiff})
    for the $\propto$, part (4) to kill the first term, and induction.
  \item Expand $f = \sum_{\pi\in S_\infty} c_\pi A_\pi$ in the $\ZZ[\ul y]$-basis 
    $\{A_\pi\}$ and, if $f\neq 0$, let $A_\rho$ be a summand appearing 
    (i.e. $c_\rho\neq 0$) with $\rho$ minimal in Bruhat order.
    Then $f|_\rho = \sum_{\pi} c_\pi A_\pi|_\rho = c_\rho A_\rho|_\rho$ by (4), 
    and this is $\neq 0$ by (5).
  \item In the finite $\ZZ[\ul y]$-expansion $p = \sum_\rho d_\rho A_\rho$, 
    if $\sigma$ is chosen minimal such that $d_\sigma \neq 0$, then
    $p|_\sigma = \sum_\rho d_\rho A_\rho|_\sigma = d_\sigma A_\sigma|_\sigma \neq 0$,
    so $\sigma$ lies in $p$'s support. Meanwhile, by (4) $\sigma$ must
    also be Bruhat-minimal in $p$'s support. When we perform the subtraction
    in the algorithm, the coefficient is $d_\sigma$, 
    and the number of terms in $p$ decreases.
  \end{enumerate}
\end{proof}

When we later learn $A=C=G$, then properties (4), (5) of the $(A_\pi)$ 
will also hold for $(C_\pi), (G_\pi)$, and we leave the reader
to seek direct proofs of them.

\begin{Proposition}[Equivariant Monk's rule]\label{prop:eqvtMonk}
  Let $\pi \in S_\infty$, $i>0$. Then
  $$ (x_i-y_{\pi(i)}) A_\pi = 
  \sum_{\rho \gtrdot \pi} A_\rho
  \begin{cases} +1 &\text{if }    \rho(i) > \pi(i) \\
    -1 &\text{if }    \rho(i) < \pi(i) \\
    0 &\text{if }    \rho(i) = \pi(i) 
  \end{cases}
  $$
\end{Proposition}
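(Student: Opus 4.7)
The plan is to apply the support-based expansion algorithm of Proposition \ref{prop:pointrestriction}(7). Set $f := (x_i - y_{\pi(i)}) A_\pi$ and write $f = \sum_\rho c_\rho A_\rho$ in the Schubert basis. By property (4) the support of $A_\pi$ is contained in $\{\tau \geq \pi\}$, and the linear factor evaluates at $\tau$ to $y_{\tau(i)} - y_{\pi(i)}$, vanishing when $\tau(i) = \pi(i)$. Combined with the homogeneity $\deg f = \ell(\pi)+1$, this restricts the expansion to covers: $f = \sum_{\rho \gtrdot \pi} c_\rho A_\rho$.

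Next, extract each coefficient by evaluating at $\rho$: for $\rho' \gtrdot \pi$ distinct from $\rho$, the elements $\rho',\rho$ have equal length and are therefore Bruhat-incomparable, so $A_{\rho'}|_\rho = 0$ by (4); and $A_\rho|_\rho \neq 0$ by (5). Thus
$$c_\rho = \frac{(y_{\rho(i)} - y_{\pi(i)})\,A_\pi|_\rho}{A_\rho|_\rho}.$$
When $\rho(i) = \pi(i)$ this is $0$, matching the stated formula. Otherwise, what remains is showing the quotient equals $+1$ when $\rho(i) > \pi(i)$ and $-1$ when $\rho(i) < \pi(i)$.

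I expect the $\pm 1$ identification to be the main obstacle. I would handle it by descending induction on $\ell(\pi)$: the base case $\pi = w_0^n$ can be verified directly from the product formula $A_{w_0^n} = \prod_{a+b\leq n}(x_a - y_b)$ together with an explicit enumeration of the covers of $w_0^n$ in $S_\infty$. For the inductive step, choose $j$ with $\pi r_j \gtrdot \pi$ so that $A_\pi = \partial_j A_{\pi r_j}$, and apply the Leibniz rule $\partial_j(pq) = (\partial_j p)\,q + (r_j p)(\partial_j q)$. In the principal case $j \notin \{i-1,i\}$, the factor $x_i - y_{\pi(i)} = x_i - y_{\pi r_j(i)}$ is $r_j$-invariant with vanishing $\partial_j$-derivative, so $(x_i - y_{\pi(i)}) A_\pi = \partial_j\bigl[(x_i - y_{\pi(i)}) A_{\pi r_j}\bigr]$; expand the inside using the inductively known Monk formula for $\pi r_j$ at index $i$ and apply $\partial_j$ termwise. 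The covers $\rho' \gtrdot \pi r_j$ with $\partial_j A_{\rho'} \neq 0$ (i.e. $\rho' r_j \lessdot \rho'$) biject via $\rho = \rho' r_j$ with the covers $\rho \gtrdot \pi$ having $\rho(i) \neq \pi(i)$; this is a Bruhat-order check using the lifting property, and the signs transfer verbatim. The remaining cases $j \in \{i-1,i\}$ produce an additional term $\partial_j(x_i)\,A_{\pi r_j} = \pm A_{\pi r_j}$, which turns out to account precisely for the Monk contribution of the cover $\rho = \pi r_j$ itself with the correct sign.
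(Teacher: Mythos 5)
Your opening step --- restricting the expansion to covers $\rho \gtrdot \pi$ via the support of $(x_i - y_{\pi(i)})A_\pi$ and degree counting --- is exactly the paper's. After that you diverge: the paper observes that a polynomial annihilated by every $\partial_j$ and by $|_e$ is zero, so it suffices to match $\partial_j\,\mathrm{LHS} = \partial_j\,\mathrm{RHS}$ for each $j$, which it checks via the twisted Leibniz rule and ascending induction on $\ell(\pi)$ (bottoming out at $\pi = e$, where $x_i - y_i = A_{r_i} - A_{r_{i-1}}$ is immediate). You instead propose a descending induction from $w_0^n$, lifting the Monk formula from $\pi r_j$ (one step longer) down to $\pi$ by applying $\partial_j$; the coefficient formula $c_\rho = (y_{\rho(i)}-y_{\pi(i)})\,A_\pi|_\rho / A_\rho|_\rho$ that you write down early ends up playing no role. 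Both routes reduce to the same Leibniz bookkeeping, so this is more a re-organization than a new argument, but the re-organization has two real gaps.

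First, the base case $\pi = w_0^n$ cannot be ``verified directly from the product formula together with an explicit enumeration of the covers.'' The only cover with $\rho(i) \neq w_0^n(i)$ is $\rho_i := w_0^n\cdot(i,\,n{+}1)$, so the claim reduces to $(x_i - y_{n+1-i})\,A_{w_0^n} = A_{\rho_i}$; the product formula hands you the left side but not $A_{\rho_i}$, and identifying the latter (the double Schubert polynomial of a dominant permutation of $S_{n+1}$ other than $w_0^{n+1}$) is not among the paper's axioms and needs its own divided-difference computation. The paper's ascending induction sidesteps this entirely. Second, for $j\in\{i-1,i\}$ your sketch still invokes ``the Monk formula for $\pi r_j$ at index $i$,'' but $x_i - y_{\pi(i)}$ is not $r_j$-invariant in those cases; chasing the Leibniz rule through, you actually need the inductive formula for $\pi r_i$ at index $i{+}1$ (resp.\ for $\pi r_{i-1}$ at index $i{-}1$), and it is only with that index shift that the extra $\pm A_{\pi r_j}$ term accounts for the cover $\pi r_j$ as you assert. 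Both issues are fillable, but filling them makes this route noticeably longer than the paper's.
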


\begin{proof}
  Using the algorithm from proposition \ref{prop:pointrestriction}(7), 
  and also proposition \ref{prop:pointrestriction}(4), 
  we know that the expansion $f = \sum_\rho c_\rho A_\rho$ can only involve
  those $\rho \geq$ elements of $f$'s support.
  The support of $(x_i-y_{\pi(i)}) A_\pi$ lies in 
  $\{\rho \in S_\infty: \rho \geq \pi\} \setminus \{\pi\}
  = \{\rho \in S_\infty: \rho > \pi\}$.
  The only elements of that set with length $\leq \deg (x_i-y_{\pi(i)}) A_\pi$
  are $\{\rho \in S_\infty: \rho \gtrdot \pi\}$.  
  Hence the left-hand side, expanded in double Schubert polynomials, must 
  have constant coefficients, not higher-degree polynomials in $\ZZ[\ul y]$.
  (This is the sense in which the ``right'' extension of Monk's
  nonequivariant rule concerns multiplication by $x_i-y_{\pi(i)}$ not just $x_i$.
  There is of course another, equally ``right'', extension,
  computing $A_{r_i} A_\pi$.)

  If a polynomial $f$ is in the common kernel of the $\partial_j$
  operators, it must be symmetric in {\em all} the $\ul x$
  variables... which means $f$ must involve no $\ul x$ variables at
  all, i.e. $f\in \ZZ[\ul y]$.  If we also insist that $f|_e = 0$ then
  we may infer $f=0$.

  Both sides of our desired equation are homogeneous polynomials of
  the same degree, $\ell(\pi) + 1$, and with $f|_e = 0$.  
  By the argument above it suffices to show that
  $\partial_j$ LHS $= \partial_j$ RHS for all $j$.
  There are five cases: $(j=i$ or $j=i-1) \times (\pi(i)>\pi(i+1)$ or
  $\pi(i)<\pi(i+1))$, and $j \neq i,i-1$, each of
  which one can check using the (itself easily checked) twisted Leibniz rule
  $\partial_i (fg) = (\partial_i f)g + (r_i f)(\partial_i g)$
  along with induction on $\ell(\pi)$. We explicitly check the 
  most unpleasant of the five cases:  $j=i, \pi(i) > \pi(i+1)$. 
  \begin{eqnarray*}
    \partial_i (x_i-y_{\pi(i)}) A_\pi 
    &=& A_\pi + (x_{i+1} - y_{\pi(i)}) \partial_i A_\pi    \\
    &=& A_\pi + (x_{i+1} - y_{(\pi r_i)(i+1)}) A_{\pi r_i} 
        \qquad\text{now use induction} \\
    &=& A_\pi + \sum_{\sigma \gtrdot \pi r_i} A_\sigma
    \begin{cases} +1 &\text{if }    \sigma(i+1) > (\pi r_i)(i+1) = \pi(i) \\
      -1 &\text{if } \sigma(i+1) < (\pi r_i)(i+1) = \pi(i) \\
      0 &\text{if } \sigma(i+1) = (\pi r_i)(i+1) = \pi(i) 
    \end{cases} \\
    \sum_{\rho \gtrdot \pi} \partial_i A_\rho
    \begin{cases} +1 &\text{if }    \rho(i) > \pi(i) \\
      -1 &\text{if }    \rho(i) < \pi(i) \\
      0 &\text{if }    \rho(i) = \pi(i) 
    \end{cases}
      &=&     \sum_{\rho \gtrdot \pi,\, \rho(i)>\rho(i+1)} A_{\rho r_i}
    \begin{cases} +1 &\text{if }    \rho(i) = (\rho r_i)(i+1)> \pi(i) \\
      -1 &\text{if }    \rho(i) = (\rho r_i)(i+1)< \pi(i) \\
      0 &\text{if }    \rho(i) = (\rho r_i)(i+1)= \pi(i) 
    \end{cases}
  \end{eqnarray*}
  Each term $\sigma$ in the first corresponds to the
  $\rho = \sigma r_i$ term in the second.
\end{proof}

\junk{
\begin{Lemma}\label{lem:gtrdot}
  If $\rho \gtrdot \pi$ and  $\rho(i) \neq \pi(i)$, we have 
  $ A_\rho|_\rho   = (y_{\rho(i)} - y_{\pi(i)})\, A_\pi|_\rho.$

  Since $A_\rho|_\rho\neq 0$ by proposition \ref{prop:pointrestriction}(5),
  likewise $A_\pi|_\rho\neq 0$, and we rewrite as
  $\frac{A_\rho|_\rho}{A_\pi|_\rho} = y_{\rho(i)} - y_{\pi(i)}$.
\end{Lemma}

\begin{proof}
  Since $\rho\gtrdot\pi$, we can find a reduced word $Q j S$ for $\rho$
  such that $QS$ is a reduced word for $\pi$. We prove the result
  by induction on $\#Q + \#S$.

  In the base case $Q$ and $S$ are empty, so $\pi=e$ and $\rho = r_j$.
  In this case we check easily that $L = \sum_{i=1}^j (x_i-y_i)$
  is the dual basis element to $\partial_i$. ({\em Proof.} If we apply any
  $\partial_w$ with $\ell(w)>1$ then $\partial_w L = 0$ by degree
  considerations. If we apply any $\partial_i$ with $i\neq j$ then
  $\partial_i L = 0$ since it's symmetric in $x_i,x_{i+1}$. 
  Then we check directly that $(\partial_e L)|_e = 0$
  and $\partial_i L = 1$ so $(\partial_i L)|_e = 1$.) Then 
  $A_\rho|_\rho = A_{r_i}|_{r_i} = L|_{r_i} = y_{i+1} - y_i 
  = (y_{\rho(i)} - y_{\rho(i+1)}) A_e|_\rho$.

  When $\#S>0$, there exists an $r_k$ (the last letter of $S$) such that 
  $\pi r_k \lessdot \rho r_k \lessdot \rho$. 
  Then by equation (\ref{eqn:divdiff})
  and proposition \ref{prop:pointrestriction}(4), 
  $$ A_{\rho r_k}|_{\rho r_k}  = (\partial_k A_\rho)_{\rho r_k}
  = \frac{\cancel{A_\rho|_{\rho r_k}} - A_\rho|_{\rho}}
  {y_{\rho r_k(j)} - y_{\rho r_k(j+1)}}
  = \frac{A_\rho|_{\rho}}  {y_{\rho(j)} - y_{\rho(j+1)}}
  $$
  When we try the same with $\pi$, 
  $$ A_{\pi r_k}|_{\rho r_k}   = (\partial_k A_\pi)_{\rho r_k}
  = \frac{{A_\pi|_{\rho r_k}} - A_\pi|_{\rho}}
  {y_{\rho r_k(j)} - y_{\rho r_k(j+1)}}
  = \frac{A_\pi|_\rho - A_\pi|_{\rho r_k}}  {y_{\rho(j)} - y_{\rho(j+1)}}
  $$
  to kill the second term we need to know $\pi \neq \rho r_k$ 
  (since they are of the same length): were $\pi = \rho r_k$ and $S = S' k$,
  we'd have $\prod (Q S') r_k = \prod Q r_j \prod S'$ 
  so $\prod (Q S') r_k = \prod Q r_j \prod S' r_k$, which
  violates the assumption that $Q j S' k$ is a reduced word.
  Now we have 
  $$ \frac{A_\rho|_\rho}{A_{\rho r_k}|_{\rho r_k}} = y_{\rho(j)}-y_{\rho(j+1)}
  =  \frac{A_\pi|_\rho}{A_{\pi r_k}|_{\rho r_k}} 
  \qquad \text{  hence} \qquad
   \frac{A_\rho|_\rho} {A_\pi|_\rho}
  =  \frac{A_{\rho r_k}|_{\rho r_k}} {A_{\pi r_k}|_{\rho r_k}} 
  = y_{\rho r_k(i)} - y_{\pi r_k(i)}
  $$
  the last by induction.

  ...

  Finally for $Q = r_q Q'$, we need to introduce divided difference
  operators $\partial_i^y$ in the $y$ variables, and the reduction from
  the $Q$ to $Q'$ case is much the same as in the $S$ to $S'$.
\end{proof}
{\bf endjunk}}

\begin{proof}[Proof of the co-transition formula for $P=A$.]
  \junk{
  Since $((x_i - y_{\pi(i)})\, A_\pi)|_\rho = 
  (x_i - y_{\pi(i)})|_\rho  A_\pi|_\rho = (y_{\rho(i)} - y_{\pi(i)}) A_\pi|_\rho$,
  the support of $(x_i - y_{\pi(i)})\, A_\pi$ is contained in 
  $\{ \rho\ :\ \rho \geq \pi, \rho(i) \neq \pi(i) \}
  \subset \{ \rho\ :\ \rho > \pi \}$.

  Such $\rho$ have $\ell(\rho) \geq 1 + \ell(\pi)$, but
  $\deg(x_i - y_{\pi(i)})\, A_\pi) = 1 + \deg(A_\pi) = 1 + \ell(\pi)$, 
  so the only possible terms in the expansion (from the algorithm)
  have $\rho \gtrdot \pi$. Since all these $\rho$ are the same length,
  they are incomparable in Bruhat order, so the expansion algorithm can 
  come to them in any order. The coefficient of $A_\rho$, 
  computed as $(y_{\rho(i)} - y_{\pi(i)})\, A_\pi|_\rho \big / A_\rho|_\rho$ 
  in the expansion algorithm, is $1$ by lemma \ref{lem:gtrdot}.
  }
  We need to check that each $\rho$ term in the 
  equivariant Monk rule has $\rho(i) \in (\pi(i), n]$,
  so as to only get positive terms and only from $\rho \in S_n$.

  Since $\pi$ has only descents before $i$ (by choice of $i$), 
  we know $\rho = \pi \circ (i\leftrightarrow b)$ with $i<b$,
  i.e. $\rho(i) = \pi(b) > \pi(i)$. 

  By choice of $i$, 
  we have $\pi = n\, n$-$1\ldots n$-$i$+$2\, \pi(i)\ldots \pi(n)$
  with $\pi(i) < n-i+1$. Hence $\exists j \in (i,n]$ with
  $\pi(j) = n-i+1 \in (\pi(i),n+1)$. The covering relations in $S_\infty$
  don't allow us to switch positions $i,n+k$ if some position
  $j \in (i,n+k)$ has $\pi(j) \in (\pi(i),\pi(n+k)=n+k)$.
\end{proof}

Lascoux' {\em transition formula} \cite{LascouxTransition}
for double Schubert polynomials is also based on Monk's rule,
but doesn't include implicit division like the co-transition formula does.
(It is worth noting that {\em each} of the summands on the right-hand side
of the co-transition formula is divisible by $x_i-y_{\pi(i)}$, not merely
their total.) We discuss the connection in \S \ref{sec:trans}.

\section{The pipe dream polynomials $(C_\pi)$}
\label{sec:pipedream}

Index the squares in the Southeastern quadrant of the plane 
using matrix coordinates $\{ (a,b)\ :\ a,b \in \{1,2,3,\ldots\} \}$.
A \defn{pipe dream} is a filling of that quadrant 
with two kinds of tiles, mostly $\textelbow$ but finitely many 
$\textcross$, such that no two pipes cross twice%
\footnote{In subtler contexts than considered here, one {\em does} allow pipes
  to cross twice, and instead refers to the pipe dreams without
  double crossings as ``reduced pipe dreams''. See \S\ref{sec:K}.}
We label the pipes $1,2,3,\ldots$ across the top side, 
and speak of ``the $1$-pipe of $D$'', ``the $2$-pipe of $D$'', and so on.
For example, the left two diagrams below are pipe dreams, the right one not:

$$
\begin{array}{ccccc}
       &\perm1{}&\perm2{}&\perm3{}&\\
\petit1&   \jr  &   \+  &   \jr & \ \cdots    \\
\petit3&   \jr  &   \jr  &   \jr \\
\petit2&   \jr  &   \jr  &   \jr \\
       &\vdots  &        &       & \ddots
\end{array}
\qquad\qquad\hbox{}\qquad\
\begin{array}{ccccc}
       &\perm1{}&\perm2{}&\perm3{}&\\
\petit1&   \jr  &   \jr  &   \jr & \ \cdots    \\
\petit3&   \+   &   \jr  &   \jr \\
\petit2&   \jr  &   \jr  &   \jr \\
       &\vdots  &        &       & \ddots
\end{array}
\qquad\qquad\hbox{}\qquad\
\begin{array}{ccccc}
       &\perm1{}&\perm2{}&\perm3{}&\\
&   \jr  &   \+   &   \jr & \ \cdots    \\
&   \+   &   \jr  &   \jr \\
&   \jr  &   \jr  &   \jr \\
       &\vdots  &        &       & \ddots
\end{array}
$$

Because of the no-double-crossing rule, if we regard a pipe dream $D$
for $\pi$ as a wiring diagram for $\pi$, 
it's easy to see that the number of $\textcross$ is exactly $\ell(\pi)$.

To a pipe dream $D$ we can associate a permutation $\pi$
by reading off the pipe labels down the left side,
and say that $D$ is a ``pipe dream for $\pi$''.
With this we can define the \defn{pipe dream polynomials}:
$$ C_\pi := \sum_{\text{pipe dreams $D$ for $\pi$}} \ \ 
\prod_{\text{crosses $+$ in $D$}} 
(x_{\text{row}(+)} - y_{\text{col}(+)}) $$

$$
\begin{matrix}
\text{Example: }C_{1423} 
   =& (x_2-y_1)(x_2-y_2) &+& (x_2-y_1)(x_1-y_3) &+& (x_1-y_2)(x_1-y_3) \\
  & \begin{array}{ccccc}
       &\perm1{}&\perm2{}&\perm3{}&\perm4{}\\
\petit1&   \jr  &   \jr  &   \jr  &  \je   \\
\petit4&   \+   &   \+   &   \je  &\\
\petit2&   \jr  &   \je  &        &\\
\petit3&   \je  &        &        &\\
\end{array}
 && \begin{array}{ccccc}
       &\perm1{}&\perm2{}&\perm3{}&\perm4{}\\
\petit1&   \jr  &   \jr  &   \+   &  \je   \\
\petit4&   \+   &   \jr  &   \je  &\\
\petit2&   \jr  &   \je  &        &\\
\petit3&   \je  &        &        &\\
\end{array}
 && \begin{array}{ccccc}
       &\perm1{}&\perm2{}&\perm3{}&\perm4{}\\
\petit1&   \jr  &   \+   &   \+   &  \je   \\
\petit4&   \jr  &   \jr  &   \je  &\\
\petit2&   \jr  &   \je  &        &\\
\petit3&   \je  &        &        &\\
\end{array}
\end{matrix}
$$
where we skip drawing any of the pipes outside the triangle 
$\{(a,b)\ :\ a+b\leq n\}$, 
as will be justified by lemma \ref{lem:wiggler} below.

The main idea of the proof of the co-transition formula for 
the $\{C_w\}$ polynomials is easy to explain.
Let $\calD_1$ be the set of pipe dreams for $w$,
and 
$$ \calD_2 := \Union\ \{\text{the pipe dreams for }w'\ :\ w' 
\text{ occurs in the co-transition formula}\}. $$
Our goal (which will take some doing) 
is to show that the maps
$$
\begin{array}{rclcrclr}
  \qquad \qquad \calD_1 &\to& \calD_2 &\qquad&    \calD_2 &\to& \calD_1& \\
  D\textelbow &\mapsto& D\textcross &&
    D\textcross &\mapsto& D\textelbow 
        &
\end{array}
$$
that place, or remove, a $\textcross$ at position $(i,\pi(i))$
have the claimed targets $\calD_2$, $\calD_1$. 
The maps are then obviously inverse, and the co-transition formula 
will follow easily. 

\begin{Lemma}\label{lem:wiggler}
  Let $w\in \Sinf$, and $i\in \{1,2,\ldots\}$ such that
  $\forall j\in \{1,2,\ldots\},$ $i<j \iff w(i)<w(j)$. 
  Then the pipe that enters from the North in column $i$ only goes
  through $\textelbow$ tiles, no $\textcross$, coming out at row $w(i)$.
  Consequently, if $w\in \Sn$ and $D$ is a pipe dream for $w$,
  then there are no $\textcross$ tiles outside the triangle
  $\{ (a,b)\ :\ a+b \leq n \}$.
\end{Lemma}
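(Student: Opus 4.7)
The plan is to exploit the wiring-diagram interpretation of a pipe dream, in which two pipes cross in some $\textcross$ tile iff their endpoints realize an inversion of the underlying permutation, combined with the no-double-crossing constraint that limits each such pair to at most one crossing.

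First I would unpack the hypothesis on $i$. The biconditional $i<j\iff w(i)<w(j)$ says that position $i$ is neither the left nor the right end of any inversion of $w$. A brief counting argument then forces $w(i)=i$: the $i-1$ values $w(1),\ldots,w(i-1)$ all lie strictly below $w(i)$, while no value among $w(i+1),w(i+2),\ldots$ does, so $\{w(1),\ldots,w(i-1)\}$ must exhaust $\{1,\ldots,w(i)-1\}$, and comparing cardinalities yields $w(i)=i$.

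Next I would argue that the pipe entering column $i$ from the top meets no $\textcross$ tile. Reading $w$ down the left side, this pipe carries label $i$ and must exit on the left at row $w^{-1}(i)=i$. Viewing the pipe dream as a wiring diagram on a disk, two pipes cross iff their four boundary endpoints interleave, which for pipes $a<b$ translates to $w^{-1}(a)>w^{-1}(b)$. Taking $\{a,b\}=\{i,k\}$ and using $w(i)=i$, the hypothesis (no position $j<i$ sends to a value $>i$, no $j>i$ sends to a value $<i$) rules out both $k<i$ and $k>i$. Hence pipe $i$ crosses nobody, so the tiles along its entire path must be $\textelbow$'s, never $\textcross$'s (since a $\textcross$ is precisely where two pipes swap).

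The shape of an $\textelbow$ (top-to-left, right-to-bottom) then propagates the path uniquely: entering $(1,i)$ from above, the pipe exits left into $(1,i-1)$, which it enters from the right and so exits downward into $(2,i-1)$, then left into $(2,i-2)$, and so on, zigzagging along the two anti-diagonals $a+b\in\{i,i+1\}$ until after $2i-2$ tiles it exits to the left out of $(i,1)$ at row $i=w(i)$. For the corollary with $w\in\Sn$, every $i>n$ satisfies the lemma's hypothesis (since $w$ fixes everything above $n$ and sends $[n]$ to $[n]$), so the pipe in column $i$ executes the zigzag above, filling the anti-diagonals $a+b\in\{i,i+1\}$ with elbows; letting $i$ range over $n+1,n+2,\ldots$ these zigzags together cover every tile with $a+b>n$, proving the claim. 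I expect the only real obstacle to be reconciling the two-pipes-cross-iff-inversion step with the paper's conventions (pipe labels on top, permutation values on the left, $w$ versus $w^{-1}$) carefully enough that the exclusion of crossings for pipe $i$ is unambiguous; once that is set up, the rest is essentially mechanical.
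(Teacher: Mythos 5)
Your proof is correct and follows essentially the same route as the paper's: a Jordan-curve / interleaving argument showing pipe $i$ crosses no other pipe, followed by tracing the forced zigzag. Your explicit derivation of $w(i)=i$ is a helpful clarification that the paper leaves implicit, and your identification of the two antidiagonals as $a+b\in\{i,i+1\}$ is actually the correct indexing (the paper's proof writes $a+b=i-1,i$, which is off by a shift).
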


\begin{proof}
  If $i<j$ and $w(i)<w(j)$, then the $i$-pipe starts and ends
  Northwest of the $j$-pipe. By the Jordan curve theorem these
  two pipes cross an even number of times, and since $D$ is a pipe dream, 
  that even number is $0$. The opposite argument (Southeast) works
  if $i>j$ and $w(i)>w(j)$.
  Doing this for all $j\neq i$, we find that the $i$-pipe crosses no other pipe,
  i.e. it goes only through $\textelbow$ tiles, ruling out $\textcross$ tiles
  in the adjacent diagonals $\{(a,b)\ :\ a+b = i-1,i\}$. 
  Finally, if $w\in \Sn$ then each $i>n$ satisfies the condition.
\end{proof}

\begin{proof}[Proof of the base case for $P=C$.]
  The number of squares in the triangle 
  $\{ (a,b)\ :\ a+b \leq n \}$ is $n\choose 2$, which is also $\ell(w_0^n)$.
  As such, every one of them must have a $\textcross$ in a pipe dream
  for $w_0^n$, making the pipe dream for $w_0^n$ unique.
  Then the definition of $C_\pi$ gives the base case.
\end{proof}

\begin{Lemma}\label{lem:crosses}
  Let $\pi,i,\rho$ be as in the co-transition formula. If $D$ is a
  pipe dream for $\pi$, then the leftmost $\textelbow$ in rows $1,2,\ldots,i$
  of $D$ occurs in column $\pi(1),\pi(2),\ldots,\pi(i)$ respectively.
  If $D'$ is a pipe dream for $\rho$,
  then the same is true in rows $1,2,\ldots,i-1$ but in 
  row $i$ the leftmost $\textelbow$ occurs strictly to the right
  of column $\pi(i)$.
\end{Lemma}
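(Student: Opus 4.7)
My plan is to prove both halves by induction on $k = 1, 2, \ldots, i$, resting on a single local observation. Write $c^*_k$ for the leftmost $\textelbow$ column in row $k$ of the pipe dream at hand, so $(k, 1), \ldots, (k, c^*_k - 1)$ are all $\textcross$ and $(k, c^*_k)$ is an elbow. Because an elbow in row $k$ bends any leftward-moving pipe back down into row $k + 1$, the unique pipe exiting row $k$ on the left is precisely the one arriving at $(k, c^*_k)$ from above. By the pipe dream convention for $\pi$ that pipe is $\pi(k)$, so the claim $c^*_k = \pi(k)$ reduces to showing that pipe $\pi(k)$ descends straight down column $\pi(k)$ through rows $1, \ldots, k - 1$.

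The base case $k = 1$ is automatic, since pipes entering $(1, c)$ from the top are labeled $c$. For the inductive step, use the minimality of $i$ to rigidify the initial segment of $\pi$: a short pigeonhole (using $\pi(k') + k' \geq n$ for $k' < i$, distinctness of the $\pi(k')$, and $\pi \in S_n$) forces $\pi(k') = n - k' + 1$ for $k' < i$ and $\pi(i) \leq n - i$. Under the inductive hypothesis, each row $k' < k$ has all $\textcross$ strictly to the left of column $\pi(k') = n - k' + 1$; since $\pi(k) \leq n - k + 1 < n - k' + 1$ for $k' < k$, column $\pi(k)$ lies in that all-crosses strip of every row above row $k$. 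Pipe $\pi(k)$ therefore descends straight to $(k, \pi(k))$, and the local observation yields $c^*_k = \pi(k)$.

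For a pipe dream $D'$ of $\rho$: since $\rho$ agrees with $\pi$ in positions $1, \ldots, i - 1$, the induction above transfers verbatim and gives $c^*_k = \pi(k)$ for $k < i$. In row $i$ the exiting pipe is $\rho(i) = \pi(b)$ for the cover $\rho = \pi \circ (i \leftrightarrow b)$ with $b > i$ and $\pi(b) > \pi(i)$. Because $\pi(b) \in \{1, \ldots, n - i + 1\}$ (the values not used by the forced initial segment), $\pi(b)$ is still strictly less than every $\pi(k') = n - k' + 1$ for $k' < i$, so pipe $\pi(b)$ also drops straight to $(i, \pi(b))$, forcing $c^*_i = \pi(b) > \pi(i)$. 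The main obstacle is the pigeonhole rigidification: it is the only place where the minimality (not mere existence) of $i$ gets used, and without that structural lemma a pipe could in principle zig-zag into $(k, \pi(k))$ from the right rather than descending straight. Once the all-crosses strip above column $\pi(k)$ is in hand, the one-directional flow of pipe dreams (no pipe ever moves up or right) locks each pipe into its straight descent, and everything else is forced.
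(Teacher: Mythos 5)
Your argument is essentially the paper's own: a row-by-row induction that traces the pipe labeled $\pi(k)$ straight down column $\pi(k)$ through the all-$\textcross$ initial strips of rows $1,\ldots,k-1$ (guaranteed by the inductive hypothesis together with $\pi(1)>\cdots>\pi(i)$), then uses the fact that this pipe must exit row $k$ on the left to force $\textcross$'s in columns $1,\ldots,\pi(k)-1$. Your ``local observation'' about which pipe arrives at $(k,c^*_k)$ from above is the same step phrased backwards as the paper's ``since it then needs to exit on the $j$th row, it must turn West.'' The $\rho$ case is handled identically in both.

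One technical slip in the pigeonhole: from $\pi(k')+k'\geq n$ for $k'<i$ you \emph{cannot} deduce $\pi(k')=n-k'+1$. For instance $n=5$, $\pi=45123$ has $\pi(1)+1=5\geq 5$ and $\pi(2)+2=7\geq 5$, yet $\pi(1)\neq 5$. You need the strict inequality $\pi(k')+k'>n$, equivalently $\pi(k')\geq n-k'+1$, and then the forcing argument works. That strict inequality is what actually holds: the intended hypothesis on $i$ is that the box $(i,\pi(i))$ lies in the staircase $\{(a,b):a+b\leq n\}$, i.e.\ $i+\pi(i)\leq n$, which is exactly what the paper's own proof for $P=A$ uses (``$\pi(i)<n-i+1$''). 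The ``$<n$'' in the co-transition lemma's statement, which you copied, looks like an off-by-one in the paper; with the corrected condition $i+\pi(i)\leq n$ your pigeonhole and the rest of the proof go through as written.
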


\begin{proof}
  Assume that the claim is established for each row above the $j$th.
  Start on the North side of $D$ in column $\pi(j)$, and follow that pipe down.
  By our inductive knowledge of rows $1\ldots j-1$, 
  and the fact that $\pi(1) > \pi(2) > \cdots > \pi(i)$ by choice of $i$,
  this pipe will go straight down through $j-1$ crosses to the $j$th row.
  Since it then needs to exit on the $j$th row, it must turn West
  in matrix position $(j,\pi(j))$, and go due West through 
  only $\textcross$ in columns $1,\ldots,\pi(j)-1$ of that row.

  Exactly the same analysis holds for $\rho$, except that $\rho(i)>\pi(i)$.
\end{proof}

The following technical lemma is key. 

\begin{Lemma}\label{lem:holey}
  Let $D$ be a filling of the Southeastern quadrant with finitely
  many $\textcross$, the rest $\textelbow$, except {\em with an empty square}
  at $(a,b)$. Let $N$, $S$, $E$, $W$ denote 
  the four pipes coming out of $(a,b)$ 
  in those respective compass directions and call the
  remaining pipes the ``old pipes''.
  Let $D\textelbow$, $D\textcross$ denote $D$ with the respective tile
  inserted at $(a,b)$; these have ``new pipes''
  $WN,ES$ in $D\textelbow$ and $NS,EW$ in $D\textcross$. Assume that:
  \begin{enumerate}
  \item Every square West of $E$ except the hole $(a,b)$ (so,
    in rows $1,\ldots,a$) has a $\textcross$,
    so in particular, the pipes $N$ and $W$ are straight.
  \end{enumerate}
  Then if $D\textelbow$, is a pipe dream, so is $D\textcross$.
  If in addition we assume
  \begin{enumerate}
  \item[(2)] No old pipe has North end between $N$ and $E$'s North ends 
    while also having West end between $W$ and $S$'s West ends
  \end{enumerate}
  then $D\textcross$ being a pipe dream implies $D\textelbow$ is a pipe dream.
\end{Lemma}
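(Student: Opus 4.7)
The plan is to compare, pair by pair, how many times each pair of pipes cross in $D\textelbow$ vs.\ $D\textcross$. The only pipes whose identity changes between the two fillings are the four ``tails'' $N,W,E,S$ emanating from the hole's neighbors, regrouped as $\{WN,ES\}$ in $D\textelbow$ and $\{NS,EW\}$ in $D\textcross$. Denote by $c_E>b$ the column where tail $E$ reaches the top boundary and by $r_S>a$ the row where tail $S$ reaches the left boundary. For two tails $X,Y$ let $[X,Y]$ be the number of cross tiles they share, and likewise $[X,P]$ for an old pipe $P$. Under hypothesis (1), tail $N$ is straight in column $b$, tail $W$ straight in row $a$, tail $E$ lives in rows $\leq a$ and columns $\geq b+1$ (since it starts at $(a,b+1)$ and only moves east and north), and tail $S$ lives in rows $\geq a+1$ and columns $\leq b$; these four row/column supports are pairwise disjoint, so $[X,Y]=0$ for distinct tails. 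This immediately gives that $WN$ and $ES$ cross $0$ times in $D\textelbow$, that $NS,EW$ cross exactly once in $D\textcross$ (at the center), and of course the old-old pipe crossings are literally the same in the two fillings.

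What remains is the count of crossings between an old pipe $P$ and each new pipe. For the first direction, suppose $D\textelbow$ is a pipe dream; we need $[N,P]+[S,P]\leq 1$ and $[W,P]+[E,P]\leq 1$ for every old $P$. If $[N,P]\geq 1$, the crossing sits at some $(r',b)$ with $r'<a$ and $P$ heading west there; by (1) the row $r'$ west of column $b$ is all crosses, so $P$ continues straight and exits at left of row $r'$. Hence $r_P\leq a-1$, so $P$ never enters the rows $\geq a+1$ where $S$ lives, giving $[S,P]=0$. Symmetrically $[W,P]\geq 1\Rightarrow c_P\leq b-1\Rightarrow[E,P]=0$.

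For the converse, suppose $D\textcross$ is a pipe dream and assume (2). The inequality $[W,P]+[N,P]\leq 1$ is automatic: if both held, $[W,P]\geq 1$ would pin $c_P\leq b-1$, yet the $N$-crossing lies in column $b>c_P$, contradicting the staircase fact that $P$ visits exactly the columns $[1,c_P]$. The real content is $[E,P]+[S,P]\leq 1$, and here is where (2) enters. Suppose both are $\geq 1$; then $[W,P]=[N,P]=0$ because in $D\textcross$ the pipe $P$ crosses each of $EW,NS$ at most once. Support compatibility forces $c_P\geq b+1$ and $r_P\geq a+1$, while the inversion rule for reduced pipe dreams applied to $(P,EW)$ and $(P,NS)$ yields $(c_P-c_E)(r_P-a)<0$ and $(c_P-b)(r_P-r_S)<0$; combined with $c_P>b$ and $r_P>a$ these force $c_P<c_E$ and $r_P<r_S$. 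So $c_P\in(b,c_E)$ and $r_P\in(a,r_S)$---precisely the old pipe that (2) forbids, yielding the contradiction. I expect this final case analysis---blending two inversion constraints with the row/column support constraints from $[E,P],[S,P]\geq 1$ to see that the unique surviving endpoint location for $P$ is exactly what (2) excludes---to be the only substantive obstacle; everything else reduces to bookkeeping about straight tails under (1).
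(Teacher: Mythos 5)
Your proof is correct and takes essentially the same approach as the paper: use condition~(1) to localize the four tails (so that distinct tails occupy disjoint regions and any old pipe crossing $N$ or $W$ continues straight out), and handle the $ES$-double-crossing in the converse by combining condition~(2) with the crossing-iff-inverted-endpoints criterion. Your packaging of that last step as the two inequalities $(c_P-c_E)(r_P-a)<0$ and $(c_P-b)(r_P-r_S)<0$, pinned against the support constraints $c_P>b$, $r_P>a$, is a mildly slicker bookkeeping than the paper's three-way case analysis on whether the old pipe's north endpoint lies left of $b$, between $b$ and $c_E$, or right of $c_E$, but the underlying argument is the same.
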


We give an example to refer to while following the case analysis
in the proof.
$$
D\textelbow = 
\begin{array}{cccccccc}
  && &\perm {WN}{} && \perm {ES}{} \\
  &&\perm1{}&\perm2{}&\perm3{}&\perm4{}&\perm5{}&\perm6{}\\
  &\petit6&   \+   &   \+   &   \+   &  \+   &  \+   &  \je  \\
\petit {WN}  &\petit2&   \+   & \jr   &   \+   &  \jr   &  \je   \\
 &\petit 1&   \jr   & \jr &   \jr   &  \je   \\
  \petit {ES}  &\petit 4 &   \jr  &   \jr   &   \je  &\\
 &\petit5&   \+  &   \je  &        &\\
 &\petit3&   \je  &        &        &\\
\end{array} \
D = 
\begin{array}{cccccccc}
  && &\perm N{} && \perm E{} \\
  &&\perm1{}&\perm2{}&\perm3{}&\perm4{}&\perm5{}&\perm6{}\\
  &\petit6&   \+   &   \+   &   \+   &  \+   &  \+   &  \je  \\
  \petit W&&   \+   &\rlap{\!$\square$}   &   \+   &  \jr   &  \je   \\
 &\petit 1&   \jr   & \jr &   \jr   &  \je   \\
  \petit S  &&   \jr  &   \jr   &   \je  &\\
 &\petit5&   \+  &   \je  &        &\\
 &\petit3&   \je  &        &        &\\
\end{array} \
D\textcross = 
\begin{array}{cccccccc}
  && &\perm {NS}{} && \perm {EW}{} \\
  &&\perm1{}&\perm2{}&\perm3{}&\perm4{}&\perm5{}&\perm6{}\\
  &\petit6&   \+   &   \+   &   \+   &  \+   &  \+   &  \je  \\
\petit{EW}  &\petit 4&   \+   &\+   &   \+   &  \jr   &  \je   \\
 &\petit 1&   \jr   & \jr &   \jr   &  \je   \\
  \petit {NS}  &\petit 2  &   \jr  &   \jr   &   \je  &\\
 &\petit5&   \+  &   \je  &        &\\
 &\petit3&   \je  &        &        &\\
\end{array}
$$

\begin{proof}
  Say $D\textelbow$ is a pipe dream, i.e.
  its new pipes $WN$ and $ES$ don't cross any other pipe twice;
  in particular no old pipe crosses any of $N,S,E,W$ twice.
  We need to make sure that in $D\textcross$
  the two new pipes $NS$ and $EW$ don't cross any old pipe twice.
  Equivalently, no old pipe should cross both $W$ and $N$,
  or both $E$ and $S$.
  Exactly the same analysis will hold for the opposite direction:
  if $D\textcross$ at $(a,b)$ is a pipe dream,
  we need show that no old pipe crosses both $E$ and $S$,
  or both $N$ and $W$.

  If a pipe (in either $D\textelbow$ or $D\textcross$) crosses $N$
  going West, then by condition (1) it goes straight West from there
  and cannot cross $W$ or $S$.  Similarly, if a pipe crosses $W$ going
  North, then by condition (1) it goes straight North from there and
  cannot cross $N$ or $E$.

  That rules out double-crossing $NS$, $EW$, and $WN$,
  so is already enough to establish our first conclusion
  ($D\textelbow$ a pipe dream $\implies D\textcross$ a pipe dream).
  What remains for the second conclusion is to show that, if $D\textcross$ 
  is a pipe dream, then no old pipe should cross both $E$ and $S$.

  Let $i,j$ denote the respective columns of the tops of $N$, $E$.
  If $h<i$, then the $h$-pipe stays West of $E$. If $h>j$, and
  the $h$-pipe crosses $E$, then it does so horizontally, at which point
  it continues due West and stays above $S$. 
  Finally, if $i < h < j$, then by condition (2) the $h$-pipe
  has West end either above $W$'s West end or below $S$'s West end.
  In the first case, the $h$-pipe stays above $W$ hence above $S$.
  In the latter case, the $h$-pipe begins and ends Southeast of
  the $NS$ pipe in $D\textcross$, so doesn't cross it at all,
  hence doesn't cross $S$. 
\end{proof}

\begin{proof}[Proof of the co-transition formula for $P=C$]
  Let $\calD_1$ be the set of pipe dreams for $w$,
  and 
$$ \calD_2 := \Union\ \{\text{the pipe dreams for }w'\ :\ w' 
  \text{ occurs in the co-transition formula}\}. $$
  Let $(a,b) = (i,w(i))$. Our goal is to show that the maps
  $$
  \begin{array}{rclcrclr}
    \qquad \qquad \calD_1 &\to& \calD_2 &\qquad&    \calD_2 &\to& \calD_1& \\
    D\textelbow &\mapsto& D\textcross &&
    D\textcross &\mapsto& D\textelbow 
        &\qquad\text{as in lemma \ref{lem:crosses}}
  \end{array}
  $$
  have the claimed targets $\calD_2$, $\calD_1$. 

  Let $D\textelbow \in \calD_1$. By our choice of $i$ from the
  co-transition formula, and of $(a,b) = (i,w(i))$, lemma \ref{lem:crosses} 
  establishes condition (1) of lemma \ref{lem:holey}. 
  Hence $D\textcross$ is a pipe dream for some $w' = w(i\leftrightarrow j)$. 
  Since $D\textcross$ has one more crossing than $D\textelbow$, 
  we infer $\ell(w') = \ell(w)+1$, 
  so $w' \gtrdot w$. Consequently $D\textcross \in \calD_2$.

  Now start from $D\textcross \in \calD_2$,
  a pipe dream for some $w'$; we want to show that $D\textelbow \in \calD_1$. 
  Again our choices of $i$ and $(a,b)$ establish condition (1)
  of lemma \ref{lem:holey}.
  Define $j$ so that the $EW$ pipe of $D\textcross$ is the $j$-pipe,
  i.e. $E$ exits the North side in column $j$.
  Since $w' \gtrdot w$, we verify condition (2) of lemma \ref{lem:holey}.
  Hence $D\textelbow \in \calD_1$.

  Each $\textcross$ inserted at $(i,\pi(i))$ contributes a factor
  of $x_i-y_{\pi(i)}$ in the formula for $C$-polynomials, so while 
  the bijection above corresponds pipe dreams for $C_\pi$ to
  those for $\{C_\rho\}$, the induced equality of polynomials is
  between $\sum_\rho C_\rho$ and $(x_i-y_{\pi(i)})\, C_\pi$,
  giving the co-transition formula.
\end{proof}

\section{The matrix Schubert classes $(G_\pi)$}
\label{sec:matrix}

Define a \defn{matrix Schubert variety} $\barX_\pi \subseteq M_n(\CC)$,
for $\pi\in S_n$ or more generally\footnote{Indeed, once one allows
  partial permutation matrices there's no need for the matrices to
  be square, but square will suffice for our application.}
a {\em partial} permutation matrix, by
$$ \barX_\pi := \overline{B_- \pi B_+} \qquad\text{ closure taken in }M_n(\CC) $$
where $B_-,B_+$ are respectively 
the groups of lower and upper triangular matrices
intersecting in the diagonal matrices $T$.
The equations defining $\barX_\pi$ were determined in \cite[\S 3]{Fulton92}.

Define the \defn{matrix Schubert class}
$$
\begin{array}{rcll}
  G_\pi := \big[\,\barX_\pi\big] 
&\in& H^*_{B_- \times B_+}(M_n(\CC)) &\quad\text{using the smoothness of }M_n(\CC)\\
&\iso& H^*_{B_- \times B_+}(pt) &\quad \text{using the contractibility of }M_n(\CC)\\
&\iso& H^*_{T \times T}(pt) &\quad \text{since $B_-,B_+$ retract to $T$} \\
&\iso& \ZZ[x_1,\ldots,x_n, y_1,\ldots, y_n] 
 &\quad \text{using the usual isomorphism $T\iso (\CC^\times)^n$}
\end{array}
$$
in equivariant cohomology. 

Though we won't use it, we recall the connection to degeneracy loci
\cite{Kazarian}.
If we follow the Borel definition of $(B_- \times B_+)$-equivariant cohomology, 
based on the ``mixing space'' construction 
$Z(N) := N \times^{B_-\times B_+} E(B_-\times B_+))$,
the maps $\barX_\pi \into M_n(\CC) \to pt$ give a triangle
$$
\begin{matrix}
  Z(\barX_\pi) & \into & Z(M_n(\CC)) \\
  & \searrow & \downarrow \\
  & & Z(pt) & = & B(B_- \times B_+)
\end{matrix}
$$
where $B(B_- \times B_+)$ is the classifying space for
principal $(B_- \times B_+)$-bundles.

With this, $[Z(\barX_\pi)]$ defines a class in $H^*(Z(M_n(\CC)))$.
Since $\downarrow$ is a vector bundle hence a homotopy equivalence,
we can also take $[Z(\barX_\pi)]$ 
as a class in $H^*(B(B_-\times B_+)) =: H^*_{B_-\times B_+}$.

\newcommand\otni\hookleftarrow
Now consider a space $N$ bearing a flagged vector bundle 
$V_1 = V_1^{(n)} \otni V_1^{(n-1)} \otni \ldots\otni V_1^{(1)}$ and
a co-flagged vector bundle 
$V_2 = (V_2)_{(n)} \onto (V_2)_{(n-1)} \onto \ldots (V_2)_{(1)}$ 
(of course, in finite dimensions flagged
and co-flagged are the same concept), plus a generic map $\sigma: V_1 \to V_2$.
Since such pairs of bundles are classified by maps into 
$B(B_- \times B_+)$, we can enlarge the diagram to
$$
\begin{matrix}
  Z(\barX_\pi) & \into & Z(M_n(\CC)) \\
  & \searrow & \downarrow &\nwarrow\sigma\\
  & & B(B_- \times B_+) &\from\phantom{\sigma} &N
\end{matrix}
$$
and the genericity of $\sigma$ becomes its transversality to $Z(\barX_\pi)$.
Consequently, and using the equations from \cite[\S 3]{Fulton92}
defining $\barX_\pi$,
$$ \sigma^*\left(\left[\,\barX_\pi\right]\right)
= \left[
  \left\{ x \in N\ :\ \forall i,j, 
    \begin{array}{rl}
       \rank\left(
        V_1^{(i)} \into V_1 \xrightarrow\sigma V_2 \onto (V_2)_{(j)}
      \text{ over the point }x
      \right) \\
      \leq \rank(\text{NW $i\times j$ submatrix of $\pi$} )     
    \end{array}
    \right\}
    \right]
$$
i.e. $G_\pi = \left[\, \barX_\pi\right] \in \ZZ[x_1,\ldots,x_n,y_1,\ldots,y_n]$
is providing a universal formula for the class of the $\pi$ degeneracy
locus of the generic map $\sigma$. The principal insight is the dual
role of the space $B(B_-\times B_+)$, as the classifying space for
pairs of bundles and also as the base space of equivariant cohomology.

\begin{Lemma}\label{lem:indepn}
  The definition above is independent of $n$, so long as $\pi \in S_n$.
\end{Lemma}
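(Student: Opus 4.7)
My plan is to invoke the explicit defining equations for $\barX_\pi$ from \cite[\S 3]{Fulton92}: the variety $\barX_\pi \subseteq M_n(\CC)$ is cut out by rank inequalities on NW $i \times j$ submatrices, of the form $\rank(\text{NW }i\times j\text{ block of }M) \leq r_\pi(i,j)$, where $r_\pi(i,j) := \#\{k \leq i : \pi(k) \leq j\}$. I want to show that, for $\pi \in S_n$ regarded as an element of $S_{n+1}$ (fixing $n+1$), the resulting matrix Schubert variety in $M_{n+1}(\CC)$ is just $\barX_\pi \times \AA^{2n+1}$, with the extra factor recording the last row and last column of the matrix. Once this product decomposition is established $(T_{n+1}\times T_{n+1})$-equivariantly, the equivariant K\"unneth formula will immediately yield the desired equality of polynomials.

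The key step is to check that every rank condition introduced in passing from $S_n$ to $S_{n+1}$, namely those indexed by $(i,j)$ with $i=n+1$ or $j=n+1$, is automatic. For $j=n+1$ and any $i \leq n+1$, the target value is $r_\pi(i,n+1) = i$, which is already the maximal possible rank of an $i \times (n+1)$ matrix. Symmetrically, for $i=n+1$ and $j \leq n$, since $\pi(n+1)=n+1>j$ one computes $r_\pi(n+1,j) = r_\pi(n,j) = j$ (as $\pi$ is a bijection on $[n]$), and again this matches the maximal possible rank of an $(n+1) \times j$ matrix; the corner $(n+1,n+1)$ condition is $\rank \leq n+1$, also trivial. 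Hence only the NW $n\times n$ equations persist, and they are exactly those defining $\barX_\pi \subseteq M_n(\CC)$. The decomposition $\barX_\pi^{(n+1)} = \barX_\pi \times \AA^{2n+1}$ inside $M_{n+1}(\CC) = M_n(\CC) \times \AA^{2n+1}$ is manifestly equivariant for $T_{n+1}\times T_{n+1}$ under $(s,t) \cdot M = s M t^{-1}$: the characters $x_{n+1}, y_{n+1}$ scale only the last row and last column, acting trivially on the $\barX_\pi$ factor.

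Finally, since $\AA^{2n+1}$ is affine space with its own fundamental class equal to $1$, the equivariant K\"unneth isomorphism gives $[\barX_\pi \times \AA^{2n+1}] = [\barX_\pi] \cdot 1$ in $H^*_{T_{n+1}\times T_{n+1}}(M_{n+1}(\CC)) \iso \ZZ[x_1,\ldots,x_{n+1},y_1,\ldots,y_{n+1}]$, a polynomial involving no $x_{n+1}$ or $y_{n+1}$. Iterating proves the lemma for all $n' > n$. The only delicate point I anticipate is the rank tally in step one, which compares $r_\pi$-values against the trivial upper bounds $\min(i,j)$; the equivariant K\"unneth step is routine.
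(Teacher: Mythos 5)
Your proposal is correct and follows essentially the same route as the paper's: establish the equivariant product decomposition $\barX_\pi^{(n+1)} = \barX_\pi^{(n)} \times \AA^{2n+1}$ inside $M_{n+1}(\CC)$, then observe that crossing with a representation does not change the equivariant class. The only difference is cosmetic: you verify directly that the rank conditions along the new last row and column are vacuous (since they match the trivial bounds $\min(i,j)$), whereas the paper cites Fulton's result that the defining determinants involve only matrix entries northwest of the essential set, which is confined to the $n\times n$ block.
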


\begin{proof}
  The equations defining $\barX_\pi$, determined in \cite[\S 3]{Fulton92},
  depend only on the matrix entries northwest of the Fulton essential set
  of $\pi$, which is independent of $n$. Hence enlarging $n$ to $m$ 
  amounts to crossing both $M_n(\CC)$, and $\barX_\pi$, by the same
  irrelevant vector space consisting of matrix entries 
  $\{ (i,j)\ :\ i>n\text{ or }j>n\}$.
\end{proof}

\begin{proof}[Proof of the base case for $P=G$]
  The Rothe diagram of $w_0^n$ is the triangle
  $\{ (a,b)\ :\ a+b \leq n \}$, so by \cite[\S 3]{Fulton92}
  the equations defining $\barX_{w_0^n}$ are that each entry $m_{ab}$
  in that triangle must vanish. This $\barX_{w_0^n}$ thus
  being a complete intersection, its class is the product of the 
  $(T\times T)$-weights $x_a-y_b$ of its defining equations $m_{ab}=0$,
  giving the base case formula.
\end{proof}

The following geometric interpretation of the Rothe diagram 
seems underappreciated:

\begin{Lemma}\label{lem:rothe}
  The tangent space $T_\pi \barX_\pi$ is $(T\times T)$-invariant
  (even though the point $\pi$ itself is not!), spanned by the matrix entries
  {\em not} in the Rothe diagram of $\pi$. In particular
  $\deg G_\pi = \#($the Rothe diagram$)$, which is in turn
  $\min \{ \ell(\rho): \rho$ 
  a permutation matrix with $\pi$ as its NW corner$\}$.
\end{Lemma}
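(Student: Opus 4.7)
The plan is to recognize that $\pi$ lies in the open dense $(B_-\times B_+)$-orbit of $\barX_\pi$, so $\barX_\pi$ is smooth at $\pi$ and $T_\pi\barX_\pi$ equals the image of the differential of the orbit map $(b_-,b_+)\mapsto b_-\pi b_+$ at $(e,e)$. The orbit $B_-\pi B_+$ is locally closed and smooth as a single group orbit, and dense in $\barX_\pi$ by definition, so it is open and $\pi$ is a smooth point; concretely,
$$T_\pi\barX_\pi \;=\; \mathfrak b_-\,\pi \;+\; \pi\,\mathfrak b_+ \;\subseteq\; T_\pi M_n(\CC)=M_n(\CC).$$

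The explicit matrix computation I would do next uses the identities $E_{ij}\pi=E_{i,\pi(j)}$ for $i\ge j$ and $\pi E_{ij}=E_{\pi^{-1}(i),j}$ for $i\le j$, where $E_{ab}$ denotes the elementary matrix. Hence $\mathfrak b_-\pi+\pi\mathfrak b_+$ is the coordinate span of those $E_{k,l}$ for which $\pi^{-1}(l)\le k$ or $\pi(k)\le l$, and the complementary condition $\pi(k)>l$ and $\pi^{-1}(l)>k$ is exactly the definition of the Rothe diagram of $\pi$. Thus $T_\pi\barX_\pi$ is the coordinate subspace spanned by the matrix entries outside the Rothe diagram. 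Because this span is generated by $(T\times T)$-weight vectors $E_{k,l}$, it is automatically $(T\times T)$-invariant as a subspace of $M_n(\CC)$, even though $\pi$ itself is not $(T\times T)$-fixed --- the torus rescales each weight line without leaving the spanning vector $\pi$ alone.

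Since $\barX_\pi$ is smooth at $\pi$, its codimension in $M_n(\CC)$ agrees with that of $T_\pi\barX_\pi$, namely $\#(\text{Rothe diagram})$; and $\deg G_\pi$ equals this codimension because $G_\pi$ is the equivariant cohomology class of $\barX_\pi$. The remaining identification of $\#(\text{Rothe diagram})$ with $\min\{\ell(\rho):\pi \text{ is the NW corner of }\rho\}$ is a purely combinatorial fact: for an honest $\pi\in S_n$ the minimum is realized by $\pi$ itself and one recovers the classical $\#(\text{Rothe diagram}) = \ell(\pi)$; for a partial permutation matrix one completes it from the SE greedily so as not to create any new inversions, which leaves the Rothe diagram unchanged.

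The main piece of work is the indexing bookkeeping that matches the span $\mathfrak b_-\pi+\pi\mathfrak b_+$ to the complement of the Rothe diagram; the other ingredients --- smoothness of a single orbit inside its closure, equivariant degree equals codimension, and the Rothe/inversion count --- are standard and can each be cited in one sentence.
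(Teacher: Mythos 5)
Your proposal is correct and takes essentially the same route as the paper: tangent space to the dense orbit equals $\mathfrak b_-\pi+\pi\mathfrak b_+$, which after the elementary-matrix bookkeeping is the coordinate span of the complement of the Rothe diagram (the paper phrases this as the ``death-ray'' picture). For the final ``in turn'' clause you would want to say ``so as not to create any new \emph{diagram boxes}'' rather than ``inversions'' (for a genuine partial permutation the diagram can be strictly larger than the inversion set, e.g.\ the zero matrix in $M_1$), and to pin down the completion as the unique extension of corank-$k$ $\pi$ to $\rho\in S_{n+k}$ with the same diagram, which is the paper's formulation.
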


\newcommand\lie[1]{{\mathfrak #1}}

\begin{proof}
  The tangent space to a group orbit is the image of the Lie algebra,
  $\lie{b}_- \pi + \pi \,\lie{b}_+$. The diagonal matrices (from either side)
  scale the nonzero entries of $\pi$, and the $\lie{n}_-$, $\lie{n}_+$ copy
  those entries to the South and East, recovering the usual
  death-ray definition of the complement of the Rothe diagram.

  For the ``in turn'' claim, observe that if $\rank(\pi) = n-k$ then there
  is a unique way to extend $\pi$ to $\rho \in S_{n+k}$ without 
  adding any boxes to the Rothe diagram, and $\ell(\rho)$ is the size of that
  diagram (of $\rho$ or of $\pi$).
\end{proof}

That lemma also gives a nice proof of proposition \ref{prop:pointrestriction}(5)
for $G_\pi$, though we won't need an independent one.

To compute other tangent spaces of $\barX_\pi$,
soon, we prepare a technical lemma.

\begin{Lemma}\label{lem:entry}
  Let $\rho \geq \pi \in S_n$. For $i,j\leq n$ 
  denote the NW $i\times j$ rectangle of $M$ by $M_{[i][j]}$.
  Let $a',b'$ be such that $\rank \pi_{[a'][b']} = \rank \rho_{[a'][b']}$.
  Let $(a,b)$ be in that rectangle, such that the $a$ row and $b$ column
  of $\rho_{[a'][b']}$ vanish. Then the $(a,b)$ entry vanishes on
  every element of $T_\rho \barX_\pi$.
\end{Lemma}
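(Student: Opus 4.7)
The plan is to exhibit an $(r+1)\times(r+1)$ minor $f$ of $M_{[a'][b']}$ (where $r := \rank \pi_{[a'][b']} = \rank \rho_{[a'][b']}$) lying in the defining ideal of $\barX_\pi$ whose differential at $\rho$ equals, up to sign, the coordinate function $m_{ab}$. Since $f$ vanishes on $\barX_\pi$, its differential will kill $T_\rho \barX_\pi$, giving $V_{ab}=0$ for every tangent vector $V$ as desired.

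To build $f$, I would first observe that the $r$ nonzero entries of $\rho_{[a'][b']}$ (each equal to $1$, since $\rho$ is a permutation matrix) sit at positions $(i_1,j_1),\ldots,(i_r,j_r)$ with the $i_k$'s distinct and the $j_k$'s distinct; the vanishing of row $a$ and column $b$ of $\rho_{[a'][b']}$ forces $i_k \neq a$ and $j_k \neq b$ for every $k$. Then I would take $f$ to be the minor of $M_{[a'][b']}$ on rows $\{a,i_1,\ldots,i_r\}$ and columns $\{b,j_1,\ldots,j_r\}$. Since $\rank M_{[a'][b']} \leq r$ is one of Fulton's defining conditions for $\barX_\pi$ (\cite[\S 3]{Fulton92}), every such $(r+1)\times(r+1)$ minor lies in the ideal of $\barX_\pi$.

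It remains to compute $df_\rho$. At $\rho$ the chosen submatrix has zeros in row $a$ and in column $b$, and after reordering equals $\diag(0, I_r)$. Writing $M = \rho + \epsilon V$ and expanding the determinant, a term of order $\epsilon$ must draw exactly one factor from $\epsilon V$ and the other $r$ from nonzero entries of $\rho$; since the only nonzero entry of $\rho$ in row $i_k$ (among the chosen columns) is $\rho_{i_k,j_k}$, and similarly for columns, the only surviving choice pairs $\epsilon V_{ab}$ with $\rho_{i_1,j_1}\cdots \rho_{i_r,j_r} = 1$. Every other placement of the $V$-factor would demand a vanishing entry $\rho_{a,j_\ell}$, $\rho_{i_k,b}$, or off-diagonal $\rho_{i_k,j_\ell}$. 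Hence $df_\rho(V) = \pm V_{ab}$.

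The only subtle point is the bookkeeping in this last expansion: one must rule out all other first-order contributions to $df_\rho$, and this rests entirely on the permutation-matrix shape of $\rho$ combined with the hypothesis that row $a$ and column $b$ of $\rho_{[a'][b']}$ vanish. The insight that makes the whole lemma easy is therefore the choice of minor; once that is in place, there is no real obstacle.
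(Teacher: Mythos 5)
Your proof is correct and is the same argument as the paper's: the paper also picks out the rows $R=\{i_1,\ldots,i_r\}$ and columns $C=\{j_1,\ldots,j_r\}$ supporting $\rho_{[a'][b']}$, forms the Fulton minor on $R\cup\{a\}$, $C\cup\{b\}$, and observes that applying it to $\rho+\varepsilon Z$ yields $\pm\varepsilon Z_{ab}+O(\varepsilon^2)$. You have merely spelled out the first-order determinant expansion, which the paper leaves to the reader.
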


\begin{proof}
  Let $R$ be the nonzero rows and $C$ the nonzero columns of
  $\rho_{[a'][b']}$ (so $\#R = \#C = \rank \rho_{[a'][b']} = \rank \pi_{[a'][b']}$,
  and $a\notin R,b\notin C$ by the assumption on $(a',b')$).
  Consider the determinant that uses rows $R \cup \{a\}$ 
  and columns $S \cup \{b\}$; it is one of Fulton's required equations
  for $\barX_\pi$. We apply it to
  the infinitesimal perturbation $\rho + \varepsilon Z$.
  By construction this is $\varepsilon Z_{ab} + O(\varepsilon^2)$, 
  so for $Z$ to be in $T_\rho \barX_\pi$ we must have $Z_{ab} = 0$.
\end{proof}

This allows for a second proof of lemma \ref{lem:rothe}, when $\rho = \pi$; 
we can take $(a',b')=(a,b)$ for each $(a,b)$ in the Rothe diagram. 
These equations are already enough to cut down $\dim T_\pi \barX_\pi$ 
to the right dimension, and the tangent space can't get any 
lower-dimensional than that, so we have successfully
determined it from these determinants.
Having two proofs shows that the equations from \cite[\S 3]{Fulton92}
define a {\em generically} reduced scheme supported on $\barX_\pi$, 
unlike Fulton's stronger result that they actually define $\barX_\pi$.

\begin{Lemma}\label{lem:Tcover}
  Let $\rho = \pi \circ (i\leftrightarrow j) \gtrdot \pi$, 
  with $i<j\leq n$. 
  Then $T_\rho \barX_\pi \cap \{M : m_{i,\pi(i)} = 0\} = T_\rho \barX_\rho$.
\end{Lemma}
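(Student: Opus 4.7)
The plan is to use Lemma \ref{lem:rothe} applied to $\rho$ to present $T_\rho \barX_\rho$ as the coordinate subspace of $M_n(\CC)$ cut out by the $\ell(\rho)$ equations $\{m_{a,b}=0\}$ indexed by $(a,b)$ in the Rothe diagram of $\rho$. A key first observation is that $(i,\pi(i))$ is one of these positions: the cover $\rho = \pi(i\leftrightarrow j) \gtrdot \pi$ immediately gives $\rho(i) = \pi(j) > \pi(i)$ and $\rho^{-1}(\pi(i)) = j > i$, which are exactly the two conditions defining Rothe membership. With this in hand, the containment $T_\rho \barX_\rho \subseteq T_\rho \barX_\pi \cap \{m_{i,\pi(i)}=0\}$ is essentially free: the first factor comes from $\barX_\rho \subseteq \barX_\pi$ (since $\pi \leq \rho$ in Bruhat order), and the hyperplane $m_{i,\pi(i)}=0$ is one of the coordinate equations defining $T_\rho \barX_\rho$.

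The reverse inclusion reduces to showing that each of the remaining $\ell(\pi)$ Rothe-diagram equations for $T_\rho\barX_\rho$ already holds on $T_\rho \barX_\pi$, which I plan to do via Lemma \ref{lem:entry}. For each $(a,b)$ in the Rothe diagram of $\rho$ with $(a,b)\neq (i,\pi(i))$, I must produce a rectangle $(a',b')$ satisfying: (i) $a\leq a'$ and $b\leq b'$; (ii) $b' < \rho(a)$ (so row $a$ of $\rho_{[a'][b']}$ vanishes); (iii) $a' < \rho^{-1}(b)$ (so column $b$ vanishes); and (iv) $\rank \pi_{[a'][b']} = \rank \rho_{[a'][b']}$. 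A direct count, using that $\rho$ differs from $\pi$ only by swapping rows $i$ and $j$, shows (iv) holds precisely when $(a',b')$ lies outside the ``forbidden rectangle'' $[i,j{-}1]\times[\pi(i),\pi(j){-}1]$. When $(a,b)$ itself lies outside this rectangle, I take $(a',b')=(a,b)$ and the Rothe-diagram conditions on $(a,b)$ directly give (ii), (iii). Otherwise, since $(a,b)\neq (i,\pi(i))$, at least one of $a>i$, $b>\pi(i)$ holds, and I shift accordingly to $(a',b') = (j,b)$, $(a,\pi(j))$, or $(j,\pi(j))$---in each case landing safely outside the forbidden rectangle.

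The main obstacle is verifying (ii) and (iii) for these shifted choices; this is where the cover hypothesis enters. The condition ``$\rho \gtrdot \pi$ via transposing positions $i,j$'' forbids any $k \in (i,j)$ from having $\pi(k)\in (\pi(i),\pi(j))$. Combined with the Rothe conditions $\rho(a) > b$ and $\rho^{-1}(b) > a$, this cover property forces $\rho^{-1}(b) > j$ whenever $b\in (\pi(i),\pi(j))$ (giving (iii) when $a'=j$), and symmetrically $\rho(a) > \pi(j)$ whenever $a\in (i,j)$ with $\rho(a)>\pi(i)$ (giving (ii) when $b'=\pi(j)$). Once these two cover-driven inequalities are established, every case of the case split closes routinely, completing the proof.
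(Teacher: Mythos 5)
Your proof is correct and rests on the same engine as the paper's: apply Lemma~\ref{lem:entry} box-by-box to $\rho$'s Rothe diagram, choosing for each box $(a,b)$ a rectangle $(a',b')$ on which $\rank\pi_{[a'][b']}=\rank\rho_{[a'][b']}$, with the shift $(a',b')\in\{(j,b),(a,\pi(j)),(j,\pi(j))\}$ when $(a,b)$ lands inside the problem rectangle $[i,j-1]\times[\pi(i),\pi(j)-1]$. You differ from the paper in two worthwhile ways. First, you close the argument by a clean double inclusion (identifying $T_\rho\barX_\rho$ explicitly as a coordinate subspace via Lemma~\ref{lem:rothe}, and getting $T_\rho\barX_\rho\subseteq T_\rho\barX_\pi$ for free from $\barX_\rho\subseteq\barX_\pi$), whereas the paper shows only the containment of $T_\rho\barX_\pi\cap\{m_{i,\pi(i)}=0\}$ into a coordinate subspace and then invokes a dimension count via $\dim(\barX_\pi\cap\{m_{i,\pi(i)}=0\})$. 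Second, your case split is complete: boxes of $\rho$'s diagram on the West edge of the flipping rectangle (column $\pi(i)$, rows strictly between $i$ and $j$) can genuinely occur --- e.g. $(2,1)$ for $\pi=1324$, $\rho=2314$ --- and are handled by your shift $(a',b')=(a,\pi(j))$, whereas the paper's boundary discussion appears to misstate this edge and leave it untreated. One thing to tighten for a polished version: spell out which of the three shifts applies in which sub-case, since $(j,b)$ requires $b>\pi(i)$ and $(a,\pi(j))$ requires $a>i$, so the top edge forces the former, the West edge forces the latter, and only the interior permits all three.
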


\begin{proof}
  The diagrams of $\pi$ and $\rho$'s agree except on the boundary of
  the ``flipping rectangle'' with NW corner $(i,\pi(i))$ and SE 
  corner $(j,\pi(j))$. Let $(a,b)$ in $\rho$'s diagram;
  we need to find an $(a',b')$ to apply lemma \ref{lem:entry} to.

  For $(a,b)$ outside the flipping rectangle, hence also in $\pi$'s
  diagram, we can use $(a',b') = (a,b)$ as explained directly
  after lemma \ref{lem:entry}. In other cases we will need to move
  Southeast from $(a,b)$ to $(a',b')$, without hitting the entries
  $(a,\pi(a))$ or $(\pi^{-1}(b),b)$ making lemma \ref{lem:entry} inapplicable.

  For $(a,b)$ in the interior of the flipping rectangle, we have $i < a < j$.
  Since $(a,b)$ is in $\pi$'s diagram, $a < \pi^{-1}(b)$. 
  We know that $(\pi^{-1}(b),b)$ isn't in the flipping rectangle 
  since $\rho \gtrdot \pi$, so $\pi^{-1}(b) < i$ or $\pi^{-1}(b) > j$.
  That first case is impossible since we'd have $a < \pi^{-1}(b) < i < a$,
  so we know $\pi^{-1}(b) > j$. This means we can safely go below $(a,b)$
  to $(a',b'):= (a,j)$, 
  with the benefit that $\rank \pi_{[a][j]} = \rank \rho_{[a][j]}$
  and we can apply lemma \ref{lem:entry}.
  
  It remains to handle the boundary of the flipping rectangle.
  The South edge $(j,*)$ and East edge $(*,\pi(i)$) are not in $\rho$'s diagram,
  so not at issue. Across the top edge $a=i$ and $\pi(i) < b < \pi(j)$,
  if $(i,b)$ is in $\rho$'s diagram then $\rho^{-1}(b) > i$, and 
  similarly to the above, we learn $\rho^{-1}(b) > j$. So once again
  we can safely go below $(a,b)$ to $(a',b'):= (a,j)$, 
  with the benefit that $\rank \pi_{[a][j]} = \rank \rho_{[a][j]}$
  and we can apply lemma \ref{lem:entry}.

  Finally, $(i,\pi(i))$ is in $\rho$'s diagram, but is killed by
  the intersection with $\{M : m_{i,\pi(i)} = 0\} = T_\rho \barX_\rho$
  rather than by a determinantal condition.

  This defines a vector space of dimension $\dim\barX_\pi - 1 = \dim\barX_\rho$,
  and $\dim T_\rho (\barX_\pi \cap \{M : m_{i,\pi(i)} = 0\})$ has at
  least that dimension, so we have found it.
\end{proof}

It will actually be more convenient to prove a slightly more general
formula than the co-transition formula as stated in \S\ref{sec:overview}.
Define the \defn{dominant part of} $\pi$'s Rothe diagram to be the 
boxes connected to the NW corner (this may be empty, when $\pi(1)=1$).
These are exactly the matrix entries $(a,b)$ such that $m_{ab} \equiv 0$
on $\barX_\pi$. (A permutation is ``dominant'' in the usual sense
if the dominant part
is the entire Rothe diagram, hence the terminology.) The $(i,\pi(i))$ of the
co-transition formula was picked to be
\begin{itemize}
\item just outside of the dominant part of $\pi$'s diagram
\item while still in the NW triangle,
\end{itemize}
and to be the Northernmost such ($i$ least such).
However, the co-transition formula holds {\em for any $(i,\pi(i))$
  satisfying the two bulleted conditions.}
This generalization would have made the proof in \S\ref{sec:pipedream}
more complicated, but of course once we know $C = G$ then we know 
that the $(C_\pi)$ also satisfy this more general formula.
Notice that this formula is stable under incrementing $n$ while not
changing the Rothe diagram (e.g. replacing $\pi \in S_n$ 
by $\pi \oplus I_1 \in S_{n+1}$, or a more complicated possibility 
if $\pi$ is a partial permutation).

\begin{Lemma}\label{lem:stable}
  Let $\pi \in S_n \setminus \{w_0^n\}$ 
  stabilize to $\rho \in S_{n+1}$, so $\rho(n+1) = n+1$
  and $\pi,\rho$ have the same Rothe diagram. Pick $(i,\pi(i))$ just outside
  the dominant part of this diagram, such that $i+\pi(i) \leq n$.
  Then this more general co-transition formula is the same for
  $\pi,\rho$; there aren't extra terms in $S_{n+1}$ for the formula for
  $(x_i - y_{\rho(i)})P_\rho$.
\end{Lemma}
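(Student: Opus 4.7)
The plan is to compare the two co-transition formulas term by term and show that every term matches.

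The two left-hand sides agree immediately: since $i + \pi(i) \leq n$ forces $i < n$, we have $\rho(i) = \pi(i)$, and the polynomial $P$ is stable under extending by a fixed point, so $P_\pi = P_\rho$. This is Lemma~\ref{lem:indepn} for $P = G$; for $P = C$ it follows from Lemma~\ref{lem:wiggler}, which forces the pipe dreams for $\rho$ to have all $\textcross$ tiles inside $\{(a,b) : a+b \leq n\}$, putting them in bijection with the pipe dreams for $\pi$; for $P = A$ it is standard from the divided-difference recurrence, since $A_\pi$ is defined uniformly on $S_\infty$. In particular, $P_\sigma = P_{\sigma \oplus I_1}$ for every $\sigma \in S_n$, which also lets me identify the $S_n$-summands on the two right-hand sides once the index sets match.

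On the right-hand side I would then classify the Bruhat covers $\tau \gtrdot \rho$ in $S_{n+1}$ satisfying $\tau(i) \neq \rho(i)$. Write $\tau = \rho \circ (j_1 \leftrightarrow j_2)$ with $j_1 < j_2 \leq n+1$. When $j_2 \leq n$, the cover fixes position $n+1$ and is exactly the stabilization of a cover $\sigma \gtrdot \pi$ in $S_n$: the Bruhat-cover condition is insensitive to the ambient $n$, and $\tau(i) \neq \rho(i)$ iff $\sigma(i) \neq \pi(i)$. These terms are thus in bijection with the $S_n$-summands and contribute equal polynomials by stability. The only remaining case is $j_2 = n+1$, in which $\tau(i) \neq \rho(i)$ forces $j_1 = i$, leaving exactly one candidate extra term, $\tau = \rho \circ (i \leftrightarrow n+1)$.

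The main obstacle is to rule out this candidate by showing it is not a Bruhat cover of $\rho$. Since $\rho(i) = \pi(i)$ and $\rho(n+1) = n+1$, for the swap $(i \leftrightarrow n+1)$ to produce a cover no intermediate position $k \in \{i+1, \ldots, n\}$ may satisfy $\pi(i) < \pi(k) < n+1$; equivalently, all of $\pi(i+1), \ldots, \pi(n)$ would have to be $\leq \pi(i)$. Being $n - i$ distinct values, none equal to $\pi(i)$, they would have to fit inside $\{1, \ldots, \pi(i) - 1\}$, forcing $n - i \leq \pi(i) - 1$, i.e.\ $i + \pi(i) \geq n + 1$. This directly contradicts the hypothesis $i + \pi(i) \leq n$, so some $\pi(k)$ with $k \in (i, n]$ exceeds $\pi(i)$ and blocks the swap. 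Hence $\tau$ is not a cover, no extra terms appear in $S_{n+1}$, and the two co-transition formulas coincide.
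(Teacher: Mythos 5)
Your proof is correct and takes essentially the same route as the paper's: after identifying the covers with $j_2 \leq n$ with the $S_n$-covers of $\pi$ (stability), you isolate the unique candidate extra term $\rho\circ(i\leftrightarrow n{+}1)$ and rule it out by a pigeonhole count showing $i+\pi(i)\geq n{+}1$, contradicting $i+\pi(i)\leq n$. The only cosmetic difference is that the paper also invokes the ``just outside the dominant part'' hypothesis to upgrade your inequality to the equality $i+\pi(i)=n{+}1$ before deriving the contradiction; your one-sided estimate already suffices and is a touch cleaner.
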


(Of course this independence {\em follows} from the co-transition formula
and the linear independence of the $G$ polynomials, neither of which
we've proven yet.)

\begin{proof}
  Let $\sigma \gtrdot \rho$, so $\sigma = \rho \circ (a \leftrightarrow b)$
  with $a<b$, $\rho(a)<\rho(b)$, 
  and $c \in (a,b) \implies \rho(c) \notin (\rho(a),\rho(b))$
  (``no position $c$ is in the way when swapping positions $a,b$'').
  For $\sigma$ to appear in the co-transition formula, we also have
  $\sigma(i)\neq \pi(i)$, hence $i\in \{a,b\}$. Finally, for $(i,\pi(i))$
  to be just outside the dominant part, we need $\pi(c)<\pi(i) \implies c>i$.

  The case we need to rule out is $a=i, b>n$. Since $\pi \in S_n$, 
  we can't have $b \geq n+2$ ($n+1$ would be in the way).
  What remains is to rule out $b=n+1$. For each $c \in (i,n+1)$ to not be 
  in the way, we would need $\pi(c) > n+1$ (impossible since $\pi\in S_n$)
  or $\pi(c) < \pi(i)$. So $\pi(c)<\pi(i) \implies c>i \implies \pi(c)<\pi(i)$,
  setting up a correspondence between the $\pi(i)-1$ numbers $<\pi(i)$
  and the $n-i$ numbers $>i$. But then $i+\pi(i) = n+1$, contradicting 
  our choice of $(i,\pi(i))$. 
\end{proof}

\begin{proof}[Proof of this more general co-transition formula, for $P=G$] 
  For $\pi,i$ as in this more general co-transition formula, we have
  $$ \barX_\pi \cap \{M:m_{i\pi(i)} = 0 \}
  \ =\  \barX_\pi \cap \left\{M:
  m_{ab} = 0\ \forall (a,b) \text{ weakly NW of }(i,\pi(i)) \right\} $$
  since all those $(a,b)$ entries other than $(i,\pi(i))$ itself
  are already zero.

  The first description shows that the intersection is a hyperplane section
  (and nontrivial: $m_{i,\pi(i)} \not\equiv 0$ on $\barX_\pi$)
  of the irreducible $\barX_\pi$, so each component of the
  intersection is codimension $1$ in $\barX_\pi$. Moreover,
  $\left[\{M:m_{i\pi(i)} = 0 \} \cap \barX_\pi\right] = 
  \left[\{M:m_{i\pi(i)} = 0 \}\right]  [\barX_\pi] = (x_i - y_{\pi(i)}) G_\pi$.

  The benefit of the second description is that the two varieties
  being intersected are plainly $(B_-\times B_+)$-invariant.
  Hence that intersection is a union of $(B_-\times B_+)$-invariant
  subvarieties, each of which is necessarily a matrix Schubert variety
  $\barX_\rho$ by the Bruhat decomposition of $M_n(\CC)$.

  So far we know set-theoretically that the intersection is some
  union of $\barX_\rho \subseteq \{M:m_{i\pi(i)} = 0 \}$ (for, as yet,
  {\em partial} permutation matrices $\rho$) with 
  $\dim \barX_\rho = \dim \barX_\pi + 1$. 

  Hence $\rho(i)\neq \pi(i)$, with $\rho \gtrdot \pi$. 
  What remains is to show that every such $\rho \in S_n$ occurs,
  with multiplicity $1$, and
  that partial permutations $\rho$ (i.e. not in $S_n$) don't occur.
  Then we'll know that 
  $\left[\{M:m_{i\pi(i)} = 0 \} \cap \barX_\pi\right] = 
  \sum \left\{ 
    1\cdot [\barX_\rho]\ :\ \rho \in S_n, \ \rho \gtrdot \pi, 
    \ \rho(i) \neq \pi(i) \right\} $.

  Certainly the permutation matrix $\rho$ is in  
  $\{M:m_{i\pi(i)} = 0\}$ and $\barX_\pi$.
  If a partial permutation $\rho$ of corank $k$ were to give 
  a component, then upon stabilizing $\pi$ to $\pi^+ := \pi \oplus I_k$,
  the permutation matrix $\rho^+$ (chosen to have the same diagram as $\rho$)
  would give a component. But then $\rho^+ \gtrdot \pi^+$, and by the
  same argument as in lemma \ref{lem:stable} $\rho^+ \in S_n$, i.e. $k=0$.

  Finally, we need to show the multiplicity of the component $\barX_\rho$
  is $1$, i.e. the tangent space to $\{M:m_{i\pi(i)} = 0\} \cap \barX_\pi$ 
  at the point $\rho$ is just $T_\rho \barX_\rho$.
  This was lemma \ref{lem:Tcover}.
\end{proof}

\section{An inductive pipe dream formula}\label{sec:inductive}

The formula defining $C_\pi$ as a sum over pipe dreams,
and the co-transition formula writing $(x_i-y_{\pi(i)})C_\pi$ as a sum of
other $C_{\rho}$s, have a common generalization. We include it here,
though it's not actually required for the main theorems.

Take $\pi \in S_n$, and let $\lambda$ be an English partition fitting in the
strict Northwest triangle (i.e. $\lambda_i \leq n-i$ for $i \in [1,n]$).
Define a \defn{partial pipe dream} for the pair $(\pi, \lambda)$ to be
\begin{itemize}
\item a tiling of $\lambda$ with the two tiles as usual, and 
\item a chord diagram in the complement $\square/\lambda$ of $\lambda$
  in the square $\square$, whose $n$ chords have endpoints at the
  centers of the North and West edges of $\square/\lambda$,  
  considered up to isotopy and braid moves,
\end{itemize}
such that
\begin{itemize}
\item each chord has positive slope, hence connects a West end to a
  North end to its Northeast, and
\item the combination of the pipes in $\lambda$ and chords in
  $\square/\lambda$ connect $1\ldots n$ on the North side to
  $\pi(1)\ldots \pi(n)$ down the West side.
\end{itemize}
Some examples are given in figure \ref{fig:partialPDex}.
As the pictures suggest, one can consider the $\lambda$ region as the
``crystalline'' part of the diagram, and the $\square/\lambda$ complement
as the ``molten'' region. One uses the co-transition formula to
freeze more, increasing $\lambda$.

Associate a second permutation $\rho$ to a partial pipe dream $D$
for $\pi,\lambda$ by
$$ \rho(D) :=
\left(\prod_{{}^i \textelbow_j \ \in D} (\pi(\text{row}) \leftrightarrow j) \right)
\ \circ\ \pi
\qquad\qquad \text{product ordered NW to SE} $$

\begin{figure}[h]\label{fig:partialPDex}
  \centerline{   \epsfig{file=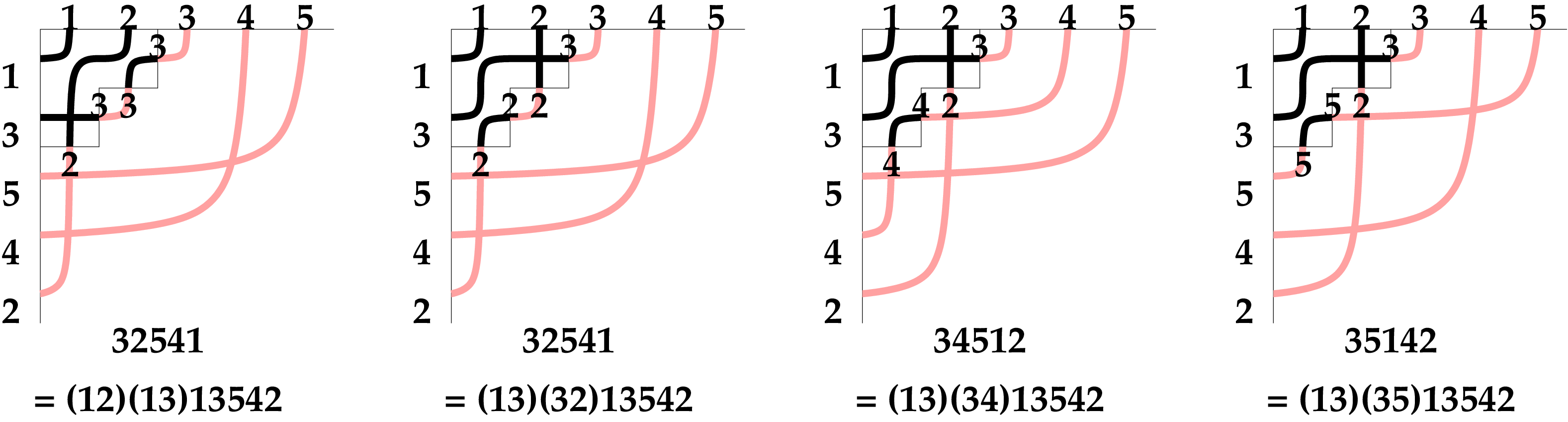,width=6in} }
  \caption{ The four partial pipe dreams $D$ for
    $(\pi = 13542$, $\lambda = 2+1)$,
    with each one's $\rho(D)$ written below it.
    Note that the tiles alone do not characterize the partial pipe dream;
    one must number the pipes. (When $\lambda$ is the full staircase every
    pipe connects to the North and West boundaries, determining its number.)
  }
\end{figure}

\begin{Theorem}\label{thm:partialPD}
  Fix $\pi\in S_n$ and $\lambda$ a partition with
  $\lambda_i \leq n-i, i \in [1,n]$. Then
  $$ C_\pi
  = \sum_D \frac{1}{\prod_{\textelbow\in D} (x_{row}-y_{col})}
  C_{\rho(D)}
  $$
  where $D$ varies over the partial pipe dreams for $(\pi,\lambda)$.
  The dominant part of the Rothe diagram of each $\rho(D)$ contains $\lambda$,
  so each summand is polynomial.
\end{Theorem}

\begin{proof}[Proof sketch.]
  Call two pipe dreams for $\pi$ \defn{$\lambda$-equivalent}
  if they agree (in tiles and labels on pipes) inside $\lambda$.
  There is an obvious map from equivalence classes
  to partial pipe dreams, which we baldly assert to be bijective.
  If we take the pipe dreams in an equivalence class $D$ and replace
  all the $\textelbow$s in $\lambda$ with $\textcross$s,
  we further assert that we get exactly the pipe dreams for $\rho(D)$.
  The result follows.
\end{proof}

If we take $\lambda$ to be the dominant part of $\pi$, then there
is only one partial pipe dream $D$ for $(\pi,\lambda)$, where
$\lambda$ is solid $\textcross$s, and theorem \ref{thm:partialPD}
says $C_\pi = C_\pi$ (since $\rho(D)=\pi$). If we take $\lambda$ to
have {\em one} more square at $(i,\pi(i))$, then the only freedom
in $D$ is the choice $\textelbow_j$ of pipe label $j$ on that square,
and theorem \ref{thm:partialPD} becomes the generalized co-transition formula
from \S\ref{sec:matrix}. Finally, if we take $\lambda$
to be the full staircase $(n-1)+(n-2)+\ldots+1$, then every $D$ has
$\rho(D) = w_0$, and theorem \ref{thm:partialPD} recovers
the definition of $C_\pi$ as a sum over pipe dreams.

\section{Transition vs. co-transition}\label{sec:trans}

In \cite{KM} the Fulton determinants defining $\barX_\pi$ were shown 
to be a Gr\"obner basis for antidiagonal term orders $<$,
and the components of $init_<\ \barX_\pi$ to be in obvious correspondence
with $\pi$'s pipe dreams.
There are four natural sources of antidiagonal term orders:
\begin{enumerate}
\item lexicographic, where the matrix entries are ordered from NE to SW
  (more precisely, by some linear extension of that partial order)
\item lexicographic, where the matrix entries are ordered from SW to NE
\item reverse lexicographic, where the matrix entries are ordered from NW to SE
\item reverse lexicographic, where the matrix entries are ordered from SE to NW.
\end{enumerate}
Slicing $\barX_\pi$ with the hyperplane $m_{i,\pi(i)}=0$ is a way of doing
the first nontrivial step of the third kind of Gr\"obner degeneration, 
and hence, will a priori be compatible with the pipe dream combinatorics. 
(It is from there that the co-transition formula, 
and \S \ref{sec:pipedream}, were reverse-engineered.
Stated more bluntly: after this insight, producing the rest of the paper was
essentially an exercise.)

Define the \defn{co-dominant part outside} $\pi$'s Rothe diagram 
as the set of matrix entries $(a,b)$ such that no 
Fulton determinant defining $\barX_\pi$ involves $m_{ab}$. 
This is always connected to the SE corner of the square. Its complement
is the boxes NW of some diagram box, or equivalently NW of some essential box.
The $(i,j)$ in Lascoux' transition formula was picked to be just outside
the co-dominant part outside $\pi$'s Rothe diagram.
See \cite{trunc} for this view of the transition formula.

In unpublished work, Alex Yong and I gave a Gr\"obner-degeneration-based
proof of Lascoux' transition formula, based on one step of a {\em lex} order
from SE to NW (so, not one of the orders above compatible with pipe dreams).
For this reason, one might expect it to be very difficult to connect the
pipe dream formula to the transition formula, requiring 
``Little bumping algorithms'' and the like (see \cite{BHY}),
and essentially impossible if one wants to include the $\ul y$ variables.
Indeed, it should be about as difficult as giving a bijective proof that
two unimodular triangulations of a polytope should have the same number
of simplices. (See \cite{EM} where polytopes arise from some matrix Schubert
varieties, and this becomes more than an analogy.)

Recall the \defn{conormal variety} $CX$ of a closed subvariety $X \subseteq V$
of a vector space:
$$ CX := \overline{ \left\{ (x, f) \in V\times V^*\ :\ 
    x\in X \text{ a smooth point, } \vec v \perp T_x X \right\} }
\quad \subseteq V \times V^* $$
Use the trace form to identify $M_n(\CC)^*$ with $M_n(\CC)$, and
call two matrix Schubert varieties $\barX_\pi$, $\barX_\rho$ 
\defn{projective dual} if $C\barX_\pi \subseteq M_n(\CC) \times M_n(\CC)$
becomes $C\barX_\rho$ upon switching the two $M_n(\CC)$ factors and rotating
both matrices by $180^\circ$. (This is essentially the statement that
the projective varieties $\PP(\barX_\pi), \PP(\barX_\rho)$ are 
projective dual in the 19th-century sense; our reference is
\cite{Tevelev}.) It is a fun exercise to determine $\rho$ from $\pi$;
note that at least one of the two must be partial, not a permutation.

If $\barX_\pi$ and $\barX_\rho$ are projectively dual, then the
dominant part of $\pi$'s diagram is the $180^\circ$ rotation of the
co-dominant part outside $\rho$'s diagram -- projective duality swaps
zeroed-out coordinates with free coordinates.

Projective duality also exchanges lex term orders with revlex term orders.
So finally, in this sense, the co-transition formula is related to the
transition formula by projective duality. (The relation would be exact 
were to consider Gr\"obner degenerations of the conormal varieties, 
rather than of the matrix Schubert varieties themselves; since we only see
the components in one $M_n(\CC)$ or the other the relation is more
of an analogy.)

The reader may wonder, since the lex-from-NE term order was useful
(this is effectively the approach in \cite{transformationgroups}) and
the revlex-from-NW term order was useful (in \S\ref{sec:matrix}),
why are the other two (at $180^\circ$ from these) left out?
The $180^\circ$ symmetry is achieved if we refine the matrix Schubert
variety stratification on $M_n(\CC)$ to the pullback of the
{\em positroid stratification} on $Gr(n;\ \CC^{2n})$ along the inclusion
$graph:\, M_n(\CC) \into Gr(n;\ \CC^{2n})$ regarding $M_n(\CC)$ as
the big cell. 

In \cite{LLS} was introduced an alternative family $\{C'_\pi\}$
of ``bumpless pipe dream'' polynomials, and a proof that they match
the double Schubert polynomials. The bijection from \S\ref{sec:pipedream}
deriving the co-transition formula for the pipe dream polynomials $\{C_\pi\}$
has a tightly analogous bijective proof of the transition formula
for the $\{C'_\pi\}$, in the recent preprint \cite[\S 5]{Weigandt}.

\section{Grothendieck polynomials, 
  nonreduced pipe dreams, \\ 
  and equivariant $K$-classes}\label{sec:K}

All three families of polynomials $A,C,G$ have extensions to 
inhomogeneous Laurent polynomials $A',C',G'$ in
$\ZZ[\exp(\pm x_1),\exp(\pm x_2),\ldots,\exp(\pm y_1),\exp(\pm y_2),\ldots]$:
\begin{enumerate}
\item {\em Double Grothendieck polynomials $A'_\pi$.} These satisfy
  recurrence relations based on isobaric Demazure operators.
\item {\em Nonreduced pipe dream polynomials $C'_\pi$.} These allow pipes to
  cross twice. To read a permutation off of a (nonreduced) pipe dream,
  one follows the pipes, ignoring the second (and later) crossings
  of any two pipes.
\item {\em Equivariant $K$-classes of matrix Schubert varieties $G'_\pi$.}
  The subvariety $\barX_\pi \subseteq M_n(\CC)$ defines a class in
  $(T\times T)$-equivariant $K$-theory of $M_n(\CC)$.
\end{enumerate}
Betraying our predilection towards geometry, we call each the
``$K$-theoretic version'' of the unprimed family, with the original
being the ``cohomological''.

Each $K$-theoretic family satisfies the same new base case

\begin{Lemma*}[$K$-theoretic base case]
  For each family $P'$ we have $P'_{w_0^n} = 
  \displaystyle\prod_{i,j \in [n],\ i+j\leq n} (1 - \exp(y_i - x_j))$.
\end{Lemma*}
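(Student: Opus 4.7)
My plan is to verify the $K$-theoretic base case independently for each of $P' \in \{A', C', G'\}$, each time lifting the cohomological base case argument to the multiplicative setting.

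For $P' = G'$, I would reuse the geometric argument from \S\ref{sec:matrix}: by Fulton's equations, $\barX_{w_0^n}$ is the coordinate linear subspace of $M_n(\CC)$ cut out by the regular sequence $\{m_{ab}\}_{a+b\leq n}$ of coordinate functions, each of $(T\times T)$-weight $x_a - y_b$. The equivariant $K$-theory class of a regularly embedded vanishing locus is the Koszul product of the individual classes, and the class of the hyperplane $\{m_{ab} = 0\}$ is $1 - e^{-(x_a - y_b)} = 1 - e^{y_b - x_a}$. Multiplying over the triangle (and relabeling indices) yields the claimed formula.

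For $P' = A'$, the argument is essentially axiomatic. Just as the cohomological $A_{w_0^n}$ is the base case axiom from which the other double Schubert polynomials are constructed via the divided-difference recurrence in \S\ref{sec:schubert}, the double Grothendieck polynomial $A'_{w_0^n}$ plays the analogous role in the $0$-Hecke / isobaric Demazure framework: the claimed product is taken as its defining formula, and the other $A'_\pi$ are obtained from it by applying isobaric Demazure operators.

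For $P' = C'$, I would show that the reduced pipe dream filling the triangle $\{a+b\leq n\}$ is the unique (nonreduced) pipe dream for $w_0^n$; the formula then follows immediately from the definition of $C'_{w_0^n}$, each cross at $(a,b)$ contributing the factor $1 - e^{y_b - x_a}$. The uniqueness I would establish row by row, by an inductive adaptation of Lemma~\ref{lem:wiggler} to the nonreduced setting: for pipe $n$ to exit at row $1$ forces $(1,n)$ to be an elbow and $(1,n-1),\ldots,(1,1)$ to be crosses; for pipe $n-1$ to exit at row $2$ then pins down the second row; and so on.

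The main obstacle is this last step: carefully excluding ``compensating'' nonreduced configurations --- pairs of extra crossings between non-inverted pipe pairs outside the triangle, where the Jordan curve argument behind Lemma~\ref{lem:wiggler} alone does not immediately force zero crossings. A small-case sanity check (for $n = 2$, any cross added outside the single triangle position $(1,1)$ reroutes some pipe into an unintended exit row) makes it plausible that the row-by-row induction continues to exclude such possibilities, and making this precise would be the main work.
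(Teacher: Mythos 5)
Your $G'$ argument is correct, and it is the only one that is logically required: the paper explicitly notes that since $A' = C' = G'$ is already known (e.g.\ from \cite{KM}), one may prove the $K$-theoretic statements for $G'$ alone. Indeed $\barX_{w_0^n}$ is the coordinate subspace cut out by the regular sequence $\{m_{ab} : a+b\leq n\}$, the Koszul resolution of its structure sheaf gives $[\O_{\barX_{w_0^n}}] = \prod_{a+b\leq n}\bigl(1 - e^{-(x_a - y_b)}\bigr)$ in $K_{T\times T}(M_n(\CC))$, and relabeling $a\leftrightarrow b$ over the symmetric index set $\{a+b\leq n\}$ recovers the displayed formula. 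Your reading of the $A'$ case as the axiom of the isobaric-Demazure recursion is likewise fine.

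The gap you flag in the $C'$ case is real, and I would not try to patch the row-by-row induction. The Jordan-curve argument behind Lemma \ref{lem:wiggler} shows only that noninverted pipe pairs cross an \emph{even} number of times; in the reduced setting this forces zero, but in the nonreduced setting it permits $2, 4, \ldots$, which is precisely the ``compensating'' phenomenon you worry about, so that lemma does not adapt directly. A cleaner route is word-theoretic. Read the set of crosses as a word $Q$ in the simple reflections, a cross at $(a,b)$ contributing the letter $r_{a+b-1}$, taken in the anti-diagonal reading order; the permutation obtained by following pipes while ignoring second crossings is then exactly the Demazure product $\delta(Q)$. The Demazure product is monotone under passing to subwords, so a single cross at $(a,b)$ with $a+b\geq n+1$ would force $r_{a+b-1}\leq\delta(Q)$; but $r_m\not\leq w_0^n$ for any $m\geq n$, since reduced words for $w_0^n$ use only $r_1,\ldots,r_{n-1}$. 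Hence $\delta(Q) = w_0^n$ confines every cross to the triangle $\{a+b\leq n\}$, which has exactly $\binom{n}{2}=\ell(w_0^n)$ squares, all of which must then be crosses since $\ell(\delta(Q))\leq |Q|$. This yields the uniqueness of the $w_0^n$ pipe dream in the nonreduced setting with no casework.
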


\noindent and the recurrence

\begin{Lemma*}[the $K$-theoretic co-transition formula]
  Let $\pi,i,\{\rho\}$ be as in the cohomological co-transition formula.
  Let $S$ vary over the nonempty subsets of the set of such $\rho$.
  Then
  $$ (1 - \exp(y_{\pi(i)} - x_i)) \, P'_\pi 
  = \sum_S (-1)^{\#S-1}\, P'_{l.u.b.(S)} $$
  where $l.u.b.(S)$ is the (unique) least upper bound of $S$ in
  Bruhat order, automatically of length $\ell(\pi) + \#S$.
\end{Lemma*}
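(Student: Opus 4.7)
The plan is to verify the $K$-theoretic co-transition formula separately for each $P' \in \{G', C', A'\}$, in each case adapting the corresponding cohomological proof. The cohomological argument already supplies the single-cover ($\#S = 1$) terms; the additional terms with $\#S \geq 2$ are the ``higher-order corrections'' one expects in $K$-theory, arising either from inclusion-exclusion for unions of subvarieties or from M\"obius inversion on a combinatorial lattice.

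The geometric case $P' = G'$ is cleanest. Since $m_{i,\pi(i)}$ does not vanish identically on the irreducible $\barX_\pi$, it is a regular element, so the Koszul exact sequence $0 \to \mathcal{O}_{\barX_\pi}(-D) \to \mathcal{O}_{\barX_\pi} \to \mathcal{O}_Y \to 0$ immediately yields $[\mathcal{O}_Y] = (1 - \exp(y_{\pi(i)} - x_i))\, G'_\pi$ in $(T\times T)$-equivariant $K$-theory, where $Y := \barX_\pi \cap \{m_{i,\pi(i)} = 0\}$ is the scheme-theoretic hyperplane section. As in \S\ref{sec:matrix}, the underlying set of $Y$ is the union $\bigcup_\rho \barX_\rho$ over the covers from the cohomological co-transition formula, and Fulton's Cohen--Macaulayness of $\barX_\pi$ together with lemma \ref{lem:Tcover} shows that $Y$ is scheme-theoretically reduced. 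I would then apply the iterated Mayer--Vietoris identity
$$[\mathcal{O}_Y] \;=\; \sum_{\emptyset \neq S} (-1)^{\#S-1}\, \bigl[\mathcal{O}_{\bigcap_{\rho\in S}\barX_\rho}\bigr]$$
to the decomposition of $Y$ into its components. The key step is identifying each scheme-theoretic intersection $\bigcap_{\rho\in S}\barX_\rho$ with $\barX_{l.u.b.(S)}$: set-theoretic equality is immediate from $T\times T$-invariance and the Bruhat decomposition, and scheme-theoretic reducedness is expected to follow from the antidiagonal Gr\"obner degeneration of \cite{KM}, under which matrix Schubert ideals become squarefree monomial ideals whose intersections remain radical. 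The length assertion $\ell(l.u.b.(S)) = \ell(\pi) + \#S$ holds because every $\rho \in S$ has the form $\pi \circ (i\leftrightarrow j_\rho)$ for distinct $j_\rho > i$, so the Bruhat join simultaneously re-sorts position $i$ against the selected $j_\rho$'s, picking up exactly $\#S$ new inversions.

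For $P' = C'$, I would extend the bijection of \S\ref{sec:pipedream} to nonreduced pipe dreams. Inserting an $\textcross$ at $(i,\pi(i))$ into a (possibly nonreduced) pipe dream for $\pi$ may create second crossings with pipes further East, and a subset $S$ of $k$ covers records a choice of $k$ such ``activated'' second crossings; the alternating sign $(-1)^{\#S-1}$ then arises from M\"obius inversion converting between the reduced and nonreduced pipe-dream enumerations of $C'_{l.u.b.(S)}$. For $P' = A'$, the formula transfers automatically once $G' = C'$ is known in the $K$-setting, but a direct proof would mimic \S\ref{sec:schubert}: one checks that both sides share the same ``$K$-support'' contained in $\{\rho : \rho \geq \pi\}\setminus\{\pi\}$, and then pins down the coefficients via a support/ratio argument using the isobaric Demazure operators.

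The main obstacle in all three families is justifying the inclusion-exclusion on the nose. Geometrically, this reduces to scheme-theoretic reducedness of iterated intersections of matrix Schubert varieties, which the Gr\"obner degeneration of \cite{KM} should provide. Combinatorially, it reduces to a careful identity matching the nonreduced pipe-dream model of $C'_{l.u.b.(S)}$ with bumped-crossing decompositions for $\pi$. Algebraically, it becomes a $K$-theoretic Monk-type rule carrying alternating signs. In each family the structure of the argument follows its cohomological counterpart closely; it is the bookkeeping that becomes substantially heavier.
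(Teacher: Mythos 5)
The paper does not actually give a proof of this lemma: \S\ref{sec:K} explicitly says ``We won't prove these two for $A',C',G'$, but comment on the changes necessary from the cohomological proofs,'' and that since $A'=C'=G'$ is already known from \cite{KM} it suffices to treat one family. So the appropriate comparison is against the paper's \emph{sketch} of those changes, not a full proof.

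Your overall architecture --- Koszul sequence for the hyperplane section, inclusion--exclusion in $K$-theory over the components $\barX_\rho$, and identification of $\bigcap_{\rho\in S}\barX_\rho$ with $\barX_{l.u.b.(S)}$ --- matches the paper's sketch for $G'$, and your reducedness argument for the hyperplane section $Y$ itself (generic reducedness at $\rho$ from Lemma \ref{lem:Tcover}, plus Cohen--Macaulayness) is sound and is even somewhat more explicit than what the paper says. The serious divergence is in how you propose to get the intersections $\bigcap_{\rho\in S}\barX_\rho$ scheme-theoretically reduced. You appeal to the antidiagonal Gr\"obner degeneration of \cite{KM}, ``under which matrix Schubert ideals become squarefree monomial ideals whose intersections remain radical.'' But that degeneration is a statement about each $\barX_\rho$ individually; it does not by itself tell you that the degeneration is \emph{compatible} across the whole collection, i.e.\ that $\mathrm{init}(I_{\rho_1}+I_{\rho_2}) = \mathrm{init}(I_{\rho_1})+\mathrm{init}(I_{\rho_2})$, or equivalently that the union of the two Gr\"obner bases remains a Gr\"obner basis for the sum ideal. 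Without that, knowing each $\mathrm{init}(I_\rho)$ is squarefree says nothing about the intersection scheme. The paper routes around exactly this issue by invoking Frobenius splitting: working over $\mathbb F_p$ there is a single splitting of $M_n$ with respect to which every $\barX_\pi$ is compatibly split (pulled back from the compatible splitting of positroid varieties in the Grassmannian \cite{KLS}), and compatible splitting makes \emph{all} intersections and unions in the family reduced at once. That is the tool your argument is missing.

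On the $C'$ side the paper's sketch is just ``the bijection \ldots\ is the same,'' whereas you layer on ``activated second crossings'' and a M\"obius-inversion mechanism that converts between reduced and nonreduced pipe-dream enumerations. This is not what the paper has in mind and is, as you concede, underspecified; the paper's point is that the identical move of inserting $\textcross$ at $(i,\pi(i))$ (which Lemmas \ref{lem:crosses} and \ref{lem:holey} continue to justify in the nonreduced setting) already sorts the resulting nonreduced pipe dreams by their Demazure product among the $l.u.b.(S)$, with the alternating sign then coming from the $K$-theoretic weight convention rather than from any extra inversion. Your $A'$ remarks are consistent with the paper's observation that it suffices to prove the result for one family.

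In short: the framework you set out is the right one and agrees with the paper's intent, but the reducedness of the iterated intersections --- the one genuinely delicate point --- is handled by Frobenius splitting in the paper, not Gr\"obner degeneration, and your substitute has a real gap because Gr\"obner bases do not automatically remain Gr\"obner under sums of ideals.
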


Intriguingly, this ``boolean lattice inside Bruhat order'' phenomenon shows up 
in the $K$-theoretic transition formula \cite{LascouxTransition} as well.

We won't prove these two for $A',C',G'$, but comment on the changes 
necessary from the cohomological proofs. (Of course, it is already
known that $A' = C' = G'$, see e.g. \cite{KM}, so it suffices to prove
these results for, say, just $G'$.) The bijection in $C'$, placing
a $\textcross$ at $(i,\pi(i))$ where there was always a $\textelbow$,
is the same. For the $G'$ co-transition formula one needs to know
that the intersection $\barX_\pi \cap \left\{M:
  m_{ab} = 0\ \forall (a,b) \text{ weakly NW of }(i,\pi(i)) \right\}$
is reduced, and that each intersection $\cap_S \barX_\rho = \barX_{l.u.b.(S)}$
is likewise reduced. The swiftest way to confirm this is to observe that
there is a Frobenius splitting on the space of matrices 
(over each $\mathbb F_p$, rather than $\CC$),
with respect to which each $\barX_\pi$ is compatibly split;
as at the end of \S \ref{sec:trans}, one can infer this from the
compatible splitting of the positroid varieties in the Grassmannian
\cite{KLS}.

\end{document}